\DeclareMathSymbol{\rightrightarrows}  {\mathrel}{AMSa}{"13}
\def\Br{\operatorname{Br}}
\def\L{\operatorname{L}}
\def\max{\operatorname{max}}
\def\varholim@#1#2{\mathop{\vtop{\ialign{##\crcr
        \hfil$#1\m@th\operator@font holim$\hfil\crcr
 \noalign{\nointerlineskip\kern\ex@}#2#1\crcr
 \noalign{\nointerlineskip\kern-\ex@}\crcr}}}}
\def\hocolim{\mathpalette\varholim@\rightarrowfill@} 
\def\hoinvlim{\mathpalette\varholim@\leftarrowfill@}
\newtheorem{theorem}{Theorem}%[section]
\newtheorem{lemma}[theorem]{Lemma}
\newtheorem{corollary}[theorem]{Corollary}
\theoremstyle{definition}
\newtheorem{remark}[theorem]{Remark}
\begin{document}

\title{\bf Layers and stability}
\author{J.F. Jardine\thanks{Supported by NSERC.}}

\affil{\small Department of Mathematics\\University of Western Ontario\\
  London, Ontario, Canada
}
\affil{jardine@uwo.ca}

%\date{January 15, 2021}

\maketitle

\begin{abstract}
  The hierarchy associated to clusters in the HDBSCAN algorithm has layers, which are defined by cardinality. The layers define a layer subposet of the HDBSCAN hierarchy, which is a strong deformation retract and admits a stability analysis. That stability analysis is introduced here. Cardinality arguments lead to sharper results for layers than one sees for stability statements for branch points.

  This paper appeared on the author's website in January, 2021.
\end{abstract}

\section*{Introduction}

Every finite metric space $X=(X,d)$ has an associated system of partially ordered sets $P_{s}(X)$, where $s$ is a non-negative real number. This system is filtered by the systems $P_{s,k}(X)$ where $k$ is a positive integer.

The poset $P_{s}(X)$ consists of those subsets $\sigma$ of $X$ such that $d(x,y) \leq s$ for all $x,y \in  \sigma$.

The poset $P_{s,k}(X)$ consists of those subsets $\tau$ of $X$ such that each $x \in \tau$ has at least $k$ distinct neighbours $y \in X$ such that $d(x,y) \leq s$. We also require that $d(x,x') \leq s$ for any two members $x,x'$ of $\tau$.

The poset $P_{s}(X)$ is the poset of simplices for the Vietoris-Rips complex $V_{s}(X)$, and the poset $P_{s,k}(X)$ is the poset of simplices of the degree Rips comples $L_{s,k}(X)$.

Observe that $P_{s,0}(X) = P_{s}(X)$, so that the complex $L_{s,0}(X)$ is the Vietoris-Rips complex $V_{s}(X)$.
\medskip

For a fixed density parameter $k$, the path component functor $\pi_{0}$ defines an assignment $s \mapsto \pi_{0}L_{s,k}(X)$, giving a functor defined on the poset $[0,\infty]$ that takes values in sets.  The sets of path components $\pi_{0}L_{s,k}(X)$ are commonly called clusters.

This functor defines a graph $\Gamma_{k}(X)$, with vertices consisting of pairs $(s,[x])$ with $[x] \in \pi_{0}L_{s,k}(X)$. There is an edge $(s,[x]) \to (t,[y])$ if $s \leq t$ and $[x]=[y]$ in $\pi_{0}L_{t,k}(X)$. This graph is a hierarchy, or tree, which is commonly called the HDBSCAN hierarchy. It is also a poset because the edges can be composed, and this poset has a terminal object. I write $(s,[x]) \leq (t,[y])$ for edges (or morphisms) of $\Gamma_{k}(X)$ to reflect the poset structure.

A vertex $(s,[x])$ of $\Gamma_{k}(X)$ is a {\it branch point} if either $(s,[x])$ has no antecedents $(t,[y]) \leq (s,[x])$, or if $(s,[x])$ has distinct antecedents $(t,[y_{1}])$ and $(t,[y_{2}])$ for sufficiently close $t < s$.

A vertex $(t,[y])$ is a {\it layer point} if it has no antecedents, or if for all antecedents $(s,[z]) \leq (t,[y])$ with $s < t$, the set $[z]$ is strictly smaller than $[y]$ as a subset of $X$.

Every branch point is a layer point, but the converse assertion does not hold in general.
Layer points and branch points do coincide for the Vietoris-Rips system $V_{s}(X) = L_{s,0}(X)$, since the underlying system of vertex sets is constant.
\medskip

The branch points and layer points, respectively, define subposets $\Br_{k}(X)$ and $\L_{k}(X)$ of the tree $\Gamma_{k}(X)$, and there are poset inclusions
\begin{equation*}
  \Br_{k}(X) \subseteq \L_{k}(X) \subset \Gamma_{k}(X).
\end{equation*}
These subposets are themselves hierarchies.

The purpose of this note is to describe stability properties of the layer poset $\L_{k}(X)$. There is a similar investigation of stability properties of branch points in \cite{br-pts} --- the posets $\Br_{k}(X)$ of branch points and $\L_{k}(X)$ of layer points have similar properties, but the cardinality counts associated with layer points are sharper tools. 
\medskip

The first section of this paper establishes the formal properties of the poset $L_{k}(X)$ of layer points. The most important feature of $L_{k}(X)$ is that it has a calculus of least upper bounds (Lemma \ref{lem 4}), which mirrors the theory of least upper bounds for the branch point poset $\Br_{k}(X)$ that appears in \cite{br-pts}. The inclusion $\Br_{k}(X) \subset L_{k}(X)$ preserves least upper bounds. The poset morphisms
\begin{equation*}
  \Br_{k}(X) \subseteq L_{k}(X) \subset \Gamma_{k}(X)
\end{equation*}
define both $\Br_{k}(X)$ and $\L_{k}(X)$ as strong deformation retracts of the poset $\Gamma_{k}(X)$, in a way that is consistent with the inclusion $\Br_{k}(X) \subset L_{k}(X)$ --- see Lemma \ref{lem 5} and Lemma \ref{lem 6}. The retraction map $\max: \Gamma_{k}(X) \to L_{k}(X)$ is defined by setting $\max(t,[x])$ to be the maximal layer point below $(t,[x])$. The layer point $\max(t,[x])$ can also be defined to be the minimal point $(s,[z]) \leq (t,[x])$ such that $[z]=[x]$ as subsets of the set $X$.
\medskip

If $i: X \subset Y$ is an inclusion of finite metric spaces, then there is an induced poset map $i_{\ast}: L_{k}(X) \to L_{k}(Y)$, where $i_{\ast}(t,[x])$ is defined to be the maximal layer point below $(t,[i(x)])$ in $\Gamma_{k}(Y)$. 

The degree Rips stability theorem (Theorem 4 of \cite{persist-htpy}) says that there are homotopy commutative diagrams
\begin{equation}\label{eq 1}
  \xymatrix{
    L_{s,k}(X) \ar[r]^{\sigma} \ar[d]_{i} & L_{s+2r,k}(X) \ar[d]^{i} \\
    L_{s,k}(Y) \ar[r]_{\sigma} \ar[ur]^{\theta} & L_{s+2r,k}(Y)
  }
  \end{equation}
in the presence of a condition $d_{H}(X^{k}_{dis},Y^{k}_{dis}) < r$ on Hausdorff distance between spaces of $k+1$ distinct points in $X$ and $Y$.
Theorem \ref{th 8} of Section 2 says that the diagram (\ref{eq 1}) induces a homotopy commutative diagram
\begin{equation}\label{eq 2}
  \xymatrix{
    L_{k}(X) \ar[r]^{\sigma_{\ast}} \ar[d]_{i_{\ast}} & L_{k}(X) \ar[d]^{i_{\ast}} \\
    L_{k}(Y) \ar[r]_{\sigma_{\ast}} \ar[ur]^{\theta_{\ast}} & L_{k}(Y)
  }
\end{equation}
The map $i_{\ast}$ in (\ref{eq 2}) has already been defined, and all other maps in (\ref{eq 2}) are defined analogously. For example, the shift homomorphism $\sigma_{\ast}$ is defined, for a layer point $(t,[x])$, by taking $\sigma_{\ast}(t,[x])$ to be the maximal layer point below $(t+2r,[x])$. The homotopy commutativity of (\ref{eq 2}) amounts to the existence of natural relations
\begin{equation*}
  \theta_{\ast} \cdot i_{\ast} \leq \sigma_{\ast}\ \text{and}\ i_{\ast} \cdot \theta_{\ast} \leq \sigma_{\ast}.
  \end{equation*}

These relations have to be interpreted a bit carefully. If, for example, $(s.[x])$ is a layer point of $\Gamma_{k}(X)$, then $(t,[x])$ is a layer point below $(t+2r,[x])$ so that $(t,[x]) \leq \sigma_{\ast}(t,[x])$. This means that $\sigma_{\ast}(t,[x])$ is a common upper bound for $(t,[x])$ and $\theta_{\ast}i_{\ast}(t,[x])$, while $\sigma_{\ast}(t,[x])$ has the form $(u,[x])$ for some parameter value $t \leq u \leq t+2r$.

I have not yet found a good way to estimate the corresponding parameter value of $\theta_{\ast}i_{\ast}(t,[x])$ without some extra assumptions. At this level of generality, we have the same issues with locating parameter values for the points $i_{\ast}(t,[x])$ and $\theta_{\ast}(s,[y])$, relative to $t$ and $s$, respectively.
\medskip

We can sharpen these relations if the layer points are sufficiently sparse. The {\it layer parameters} are the parameters $t$ associated to the layer points $(t,[x])$ of $\Gamma_{k}(X)$. A layer parameter $t$ can have a successor $t_{+}$ and a predecessor $t_{-}$. Lemma \ref{lem 14} of this paper says that, if $r < t < t_{+}-2r$, then $i_{\ast}(t,[x]) = (s,[y])$, where $t-2r \leq s \leq t$.
Under the same assumptions, Corollary \ref{cor 15} further says that $\theta_{\ast}i_{\ast}(t,[x]) = (t,[x])$.
\medskip

Lemma \ref{lem 14} and Corollary \ref{cor 15} deal with layer points $(t,[x])$ of $\Gamma_{k}(X)$ which have enough room ``above'' them.
If $r$ is sufficiently small such that $r < t < t_{+}-2r$ for all layer parameters $t$ of $X$, then $\theta_{\ast}i_{\ast}(t,[x]) = (t,[x])$ for all layer points $(t,[x])$ of $X$, so that $\Gamma_{k}(X)$ is a retract of $\Gamma_{k}(Y)$.

This can be achieved, for example, if $X \subset Z$ is an inclusion of metric spaces, where $X$ is interpreted as a set of marked points, $r$ is chosen sufficently small that $r < t < t_{+}-2r$ for all layer parameters $t$ of $X$, and the points of $Y \subset Z$ are chosen such that $d_{H}(X^{k}_{dis},Y^{k}_{dis}) < r$ in $Z^{k}_{dis}$.
\medskip

The analysis simplifies for Vietoris-Rips complexes. In that case, $X$ and $Y$ are the vertex sets of $V_{s}(X)$ and $V_{s}(Y)$, respectively, for all $s$. Then Lemma \ref{lem 17} says that if $(s,[y])$ is a layer point of $\Gamma_{0}(Y)$ and $(t,[x])$ is a maximal layer point below $(s+2r,[\theta(y)])$, then $s \leq t \leq s+2r$. This means, for example, that every layer parameter $s$ of $\Gamma_{0}(Y)$ satisfies $t-2r \leq s \leq t$ for some layer parameter $t$ of $\Gamma_{0}(X)$. Lemma \ref{lem 14} and Lemma \ref{lem 17} together say that the layer parameters of $\Gamma_{0}(X)$ and $\Gamma_{0}(Y)$ for the respective Vietoris-Rips systems are very tightly bound, in a predictable way.
%\bigskip

%\tableofcontents

\section{Layer points}

Suppose that $X$ is a finite metric space and that $k$ is a positive integer. The functor $s \mapsto \pi_{0}L_{s,k}(X)$  has a homotopy colimit $\Gamma_{k}(X)$ having objects $(s,[x])$ with $[x] \in \pi_{0}L_{s,k}(X)$ and morphisms $(s,[x]) \to (t,[x])$ with $s \leq t$. Here, the distance parameters $s$ are positive real numbers, and hence members of the interval $[0,\infty]$.

This category $\Gamma_{k}(X)$ is a partially ordered set, and has the structure of a tree, and one writes $(s,[x]) \leq (t,[y])$ for its morphisms.
The spaces $L_{s,k}(X)$ are connected for $s$ sufficiently large, say $s \geq R$, since $X$ is a finite set.

I often write $[x]_{s}$ for $[x] \in \pi_{0}L_{s,k}$. The path component $[x]_{s}$ is a subset of the vertices of $L_{s,k}(X)$. There is a relation $(s,[x]) \leq (t,[y])$ if and only if $s \leq t$ and $[x]_{s} \subset [y]_{t}$ as subsets of $X$.

        A {\it branch point} in the tree $\Gamma_{k}(X)$ is a vertex $(t,[x])$ such that either of following two conditions hold:
    \begin{itemize}
    \item[1)] there is an $s_{0} < t$ such that for all $s_{0} \leq s < t$ there are distinct vertices $(s,[x_{0}])$ and $(s,[x_{1}])$ with $(s,[x_{0}]) \leq (t,[x])$ and $(s,[x_{1}]) \leq (t,[x])$, or
    \item[2)] there is no relation $(s,[y]) \leq (t,[x])$ with $s < t$.
\end{itemize}
The second condition means that the path component $[x]$ does not have a representative in  $L_{s,k}(X)$ for $s < t$.    
Write $\Br_{k}(X)$ for the subposet of $\Gamma_{k}(X)$, which is defined by the branch points.
\medskip

A {\it layer point} of $\Gamma_{k}(X)$ is a vertex $(t,[x])$ such that one of the following two conditions hold:
\begin{itemize}
\item[1)] if there is a relation $(s,[y]) \leq (t,[x])$ with $s < t$, then $[y]_{s}$ is a proper subset of $[x]_{t}$, equivalently there is a proper inequality $\vert [y]_{s} \vert < \vert [x]_{t} \vert$ in cardinality, or
\item[2)] there is no relation  $(s,[y]) \leq (t,[x])$ with $s < t$.
  \end{itemize}
The layer points form a subposet $\L_{k}(X)$ of $\Gamma_{k}(X)$.

\begin{remark}
  There is a maximal finite subsequence
  \begin{equation*}
    0 \ne t_{1} < \dots < t_{p}
  \end{equation*}
  of positive real numbers $t_{j}$, which are the distances between vertices of \begin{equation*}
    L_{k,t_{p}}(X) = L_{k,\infty}(X).
  \end{equation*}
  The numbers $t_{i}$ are the {\it phase change} numbers for the system $L_{\ast,k}(X)$. Observe that the vertices of $L_{k,t_{i}}(X)$ and $L_{k,t_{i+1}}(X)$ could coincide.

  We can find the layer points for $\Gamma_{k}(X)$ by induction on $i$, starting with the observation that all points $(t_{1},[z])$ are layer points. If $[x] \in \pi_{0}L_{t_{i},k}(X)$, then $[x] \cap L_{t_{i-1},k}(X)_{0}$ is a disjoint union of path components $[y]$. This intersection could be empty, in which case $(t_{i},[x])$ is a layer point. Otherwise, $(t_{i},[x])$ is a layer point if all $[y] \subset [x] \cap L_{t_{i-1},k}(X)_{0}$ satisfy $\vert [y] \vert < \vert [x] \vert$.
  \end{remark}

\begin{lemma}\label{lem 2}
  All branch points are layer points, and so there are poset inclusions
\begin{equation*}
  \Br_{k}(X) \subseteq \L_{k}(X) \subset \Gamma_{k}(X).
  \end{equation*}
\end{lemma}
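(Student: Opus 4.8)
The plan is to show directly that every branch point $(t,[x])$ of $\Gamma_{k}(X)$ satisfies one of the two defining conditions of a layer point; the displayed poset inclusions then follow at once, since $\L_{k}(X)$ and $\Br_{k}(X)$ carry the subposet structure inherited from $\Gamma_{k}(X)$. I would argue by cases according to which of the two branch-point conditions holds.

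If $(t,[x])$ is a branch point by condition 2), so that there is no relation $(s,[y]) \leq (t,[x])$ with $s < t$, then this is verbatim condition 2) in the definition of a layer point, and there is nothing more to prove.

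The substance is the case where $(t,[x])$ is a branch point by condition 1): there is $s_{0} < t$ such that for every $s_{0} \leq s < t$ there are distinct vertices $(s,[x_{0}])$ and $(s,[x_{1}])$ with both $\leq (t,[x])$. Here I would verify layer condition 1): given any relation $(s,[y]) \leq (t,[x])$ with $s < t$, I must produce a vertex of $[x]_{t}$ lying outside $[y]_{s}$. Set $s' = \max(s_{0},s)$, noting $s \leq s' < t$. Because the degree Rips complexes are monotone in the distance parameter, $L_{s,k}(X) \subseteq L_{s',k}(X)$, the component $[y]_{s}$ is carried into a single component $[y']_{s'}$ with $(s,[y]) \leq (s',[y']) \leq (t,[x])$ and $[y]_{s} \subseteq [y']_{s'}$. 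Applying the branch hypothesis at the parameter $s'$ yields two distinct components $(s',[x_{0}])$ and $(s',[x_{1}])$ below $(t,[x])$; at least one of them, say $(s',[x_{1}])$, differs from $(s',[y'])$. Distinct path components are disjoint as subsets of $X$, so the nonempty set $[x_{1}]_{s'}$ contains a vertex $v$ with $v \in [x_{1}]_{s'} \subseteq [x]_{t}$ and $v \notin [y']_{s'} \supseteq [y]_{s}$. Hence $[y]_{s}$ is a proper subset of $[x]_{t}$, which is exactly layer condition 1).

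The main obstacle is the bookkeeping in this last case: one must be careful that the chosen parameter $s'$ simultaneously lies in the range $[s_{0},t)$ where the branch hypothesis applies and is at least $s$ so that $[y]_{s}$ can be pushed forward, and one must invoke the functoriality of $\pi_{0}$ coming from the inclusions $L_{s,k}(X) \subseteq L_{s',k}(X)$ to guarantee that the intermediate component $[y']_{s'}$ still sits below $(t,[x])$. Once the relation $(s',[y']) \leq (t,[x])$ is in place, disjointness of distinct path components does the rest.
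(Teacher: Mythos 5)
Your proof is correct and takes essentially the same route as the paper: under branch condition 1), any antecedent $(s,[y]) \leq (t,[x])$ lands inside one of several pairwise disjoint components below $(t,[x])$, and disjointness forces the inclusion $[y]_{s} \subset [x]_{t}$ to be proper. Your version is in fact slightly more careful than the paper's terse argument, since passing to $s' = \max(s_{0},s)$ explicitly handles antecedents with $s < s_{0}$, a case the paper's proof glosses over.
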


\begin{proof}
  Suppose that condition 1) holds for the branch point $(t,[x])$: there is an $s_{0} < t$ that for all $s_{0} \leq s <t$ there are distinct points $(s,[x_{0}])$ and $(s,[x_{1}])$ such that $(s,[x_{i}]) \leq (t,[x])$.

  If $(s,[z]) \leq (t,[x])$ then $[z]$ is one of multiple path components $[v]_{s}$ of $L_{s,k}(X)$ that map to $[x]_{t}$ in $L_{t,k}(X)$. All such components are proper subsets of $[x]_{t}$. 
\end{proof}

Recall that $L_{0,s}(X)$ is the Vietoris-Rips complex $V_{s}(X)$, and that the elements of $X$ are the vertices of the Vietoris-Rips complex $V_{s}(X)$. All complexes $V_{s}(X)$ have the same vertices, namely the set  $X$.

\begin{lemma}\label{lem 3}
Every layer point of $\Gamma_{0}(X)$ is a branch point, so that $\Br_{0}(X) = L_{0}(X)$.
\end{lemma}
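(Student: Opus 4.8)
The plan is to prove the reverse inclusion $\L_0(X) \subseteq \Br_0(X)$, since Lemma \ref{lem 2} already supplies $\Br_0(X) \subseteq \L_0(X)$. The decisive feature of the Vietoris--Rips case $k=0$ is that every complex $V_s(X) = L_{s,0}(X)$ has the same vertex set, namely $X$; I would make this constancy of vertices do essentially all of the work.

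So let $(t,[x])$ be a layer point of $\Gamma_0(X)$, and argue that it is a branch point. First I would dispose of the trivial case: if there is no relation $(s,[y]) \leq (t,[x])$ with $s < t$, then condition 2) for a branch point holds verbatim, since it is literally the same as condition 2) for a layer point, and we are done. I therefore assume there is such a relation, so that $t$ exceeds the minimal parameter and $[x]_t \neq \emptyset$. The heart of the argument is then the following observation, valid precisely because vertices are constant. Fix any $s < t$. Each vertex $v \in [x]_t \subseteq X$ is a vertex of $V_s(X)$, hence lies in a path component $[v]_s$ of $V_s(X)$; since increasing the parameter only merges components, $[v]_s \subseteq [x]_t$, so that $(s,[v]_s) \leq (t,[x])$. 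These components partition the set $[x]_t$. Because $(t,[x])$ is a layer point, condition 1) forces $[v]_s \subsetneq [x]_t$ for each such $v$, so no single component exhausts $[x]_t$; as the components partition the nonempty set $[x]_t$, at least two distinct ones occur, say $(s,[x_0])$ and $(s,[x_1])$, each $\leq (t,[x])$. Since this holds for \emph{every} $s < t$, any choice of $s_0 < t$ witnesses condition 1) for a branch point on the whole interval $s_0 \leq s < t$. Hence $(t,[x])$ is a branch point, and combined with Lemma \ref{lem 2} this yields the asserted equality $\Br_0(X) = \L_0(X)$.

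The only step that requires care --- and the place where $k = 0$ is genuinely used --- is the claim that each vertex of $[x]_t$ survives down to level $s$ and contributes a component contained in $[x]_t$. For $k \geq 1$ this fails: vertices can drop out of $L_{s,k}(X)$ as $s$ decreases, so a layer point may satisfy $\lvert [y]_s \rvert < \lvert [x]_t \rvert$ purely through the loss of vertices rather than through a splitting into two components, which is exactly why the identity $\Br_k(X) = \L_k(X)$ is special to the Vietoris--Rips system. I would therefore state the vertex-constancy fact explicitly and lean on it, rather than on any count of higher-dimensional simplices, and I expect that reduction to the constant vertex set to be the main (and essentially the only) obstacle.
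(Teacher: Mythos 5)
Your proposal is correct and follows essentially the same argument as the paper: both exploit the constancy of the vertex set to show that the components of $V_{s}(X)$ lying below $(t,[x])$ partition $[x]_{t}$, so the properness of each inclusion $[y]_{s} \subset [x]_{t}$ forces at least two distinct antecedents at every level $s < t$, giving branch point condition 1). Your explicit treatment of the vacuous case (condition 2) coinciding for both definitions) and the remark on why $k \geq 1$ fails are sensible additions, but the substance matches the paper's proof.
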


\begin{proof}
The underlying sets of vertices for $V_{s}(X)$ and $V_{t}(X)$ coincide. Thus, if $(t,[x])$ is a layer point of $\Gamma_{0}(X)$ and $s < t$, then the collection $[y]$ of components of $V_{s}(X)$ that map to $[x]$ in $V_{t}(X)$ is non-empty and satisfies $\sqcup\ [y]_{s} = [x]_{t}$. There are multiple such summands $[y]_{s}$, since $(t,[x])$ is a layer point, so that all inclusions $[y]_{s} \subset [x]_{t}$ are proper. In particular, there are distinct elements $(s,[y])$ and $(s,[y'])$ below $(t,[x])$.
\end{proof}

Suppose that $(s,[x])$ and $(t,[y])$ are vertices of the graph $\Gamma_{k}(X)$. There is a unique smallest vertex $(u,[z])$ which is an upper bound for both $(s,[x])$ and $(t,[y])$ in $\Gamma_{k}(X)$. The number $u$ is the smallest parameter (necessarily a phase change number) such that $[x]_{u}=[y]_{u}$ in $\pi_{0}L_{u,k}(X)$, and so $[z]_{u}=[x]_{u}=[y]_{u}$.
In this case, 
one writes
\begin{equation*}
  (s,[x]) \cup (t,[y]) = (u,[z]).
  \end{equation*}
The vertex $(u,[z])$ is the {\it least upper bound} (or join) of $(s,[x])$ and $(t,[y])$.

Every finite collection of points $(s_{1},[x_{1}]), \dots ,(s_{p},[x_{p}])$ has a least upper bound
\begin{equation*}
  (s_{1},[x_{1}]) \cup \dots \cup (s_{p},[x_{p}])
\end{equation*}
in the tree $\Gamma_{k}(X)$. 

We know from \cite{br-pts} that the least upper bound of two branch points is a branch point, and we have an analogous result for layer points:

\begin{lemma}\label{lem 4}
  The least upper bound $(u,[z])$ of layer points $(s,[x])$ and $(t,[y])$ is a layer  point.
\end{lemma}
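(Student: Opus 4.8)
The plan is to argue by contradiction, exploiting the one defining feature of the join: that $u$ is the \emph{smallest} parameter at which $x$ and $y$ become connected. First I would fix notation by writing $(u,[z]) = (s,[x]) \cup (t,[y])$, so that $[z]_u = [x]_u = [y]_u$. Since $(s,[x]) \le (u,[z])$ and $(t,[y]) \le (u,[z])$, we have $[x]_s \subseteq [z]_u$ and $[y]_t \subseteq [z]_u$, and as $x \in [x]_s$ and $y \in [y]_t$ this records the basic fact I will use: both $x$ and $y$ lie in the vertex set of the component $[z]$ at level $u$.

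Next I would unwind what it means for $(u,[z])$ to fail to be a layer point. Negating both clauses of the definition, failure means there is a relation $(v,[w]) \le (u,[z])$ with $v < u$ for which $[w]_v$ is \emph{not} a proper subset of $[z]_u$. But $(v,[w]) \le (u,[z])$ already supplies the inclusion $[w]_v \subseteq [z]_u$, and both are finite sets, so ``not proper'' forces the equality of vertex sets $[w]_v = [z]_u$. This is exactly the step where the cardinality reformulation built into the definition of layer point does the real work: a subset of equal cardinality must be the whole set.

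The key step is then immediate. Because $x,y \in [z]_u = [w]_v$, both $x$ and $y$ are vertices of $L_{v,k}(X)$ lying in the single path component $[w]_v$, whence $[x]_v = [y]_v = [w]_v$. Thus $x$ and $y$ are already connected in $L_{v,k}(X)$ at the parameter $v < u$, contradicting the minimality of $u$ as the smallest parameter with $[x]_u = [y]_u$. Hence no such $(v,[w])$ can exist and $(u,[z])$ is a layer point.

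The one genuine obstacle I would flag is the passage from ``$x,y \in [w]_v$'' to ``this contradicts the minimality of $u$''. This silently uses that the complexes are nested, $L_{v,k}(X) \subseteq L_{u,k}(X)$ for $v \le u$, so that once $x$ and $y$ are covertices in a common component they remain so at every larger parameter; this monotonicity is what makes the ``smallest merge parameter'' well defined and makes the strict inequality $v < u$ genuinely contradictory. In the degree Rips setting I would also note that membership $y \in [w]_v$ forces $y$ to be a vertex of $L_{v,k}(X)$ in the first place, which is automatic here precisely because $[w]_v$ is a set of such vertices. Finally, I would remark that the statement extends to any finite family $(s_1,[x_1]),\dots,(s_p,[x_p])$ by induction on $p$, forming the join two points at a time, since each intermediate join is again a layer point by the two-point case just established.
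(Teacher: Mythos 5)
Your strategy --- negate the layer-point condition to obtain $(v,[w]) \leq (u,[z])$ with $v < u$ and $[w]_{v} = [z]_{u}$, then feed $x,y \in [w]_{v}$ back into the definition of the join --- is genuinely different from the paper's proof (which splits into cases: when $s,t < u$ it shows $(u,[z])$ is a branch point and invokes Lemma \ref{lem 2}, and when $u = s$ or $u = t$ the join is one of the given points), and most of it is sound. But your final step has a real gap. The contradiction you assert, namely that $[x]_{v} = [y]_{v}$ with $v < u$ violates ``the minimality of $u$ as the smallest parameter with $[x]_{u} = [y]_{u}$'', is valid only when $u > \max(s,t)$. A least upper bound of $(s,[x])$ and $(t,[y])$ necessarily has parameter at least $\max(s,t)$, so $u$ is the smallest \emph{admissible} merge parameter, not the smallest merge parameter outright; the paper's parenthetical description of $u$, which you inherited, is itself loose on exactly this point. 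When the two layer points are comparable, say $(t,[y]) \leq (s,[x])$, the join is $(u,[z]) = (s,[x])$ with $u = s$, and $x$ and $y$ may already lie in a common component strictly below $u$. Concretely, take $k=0$ and $X = \{a,b,c\}$ with $d(a,b)=1$ and $d(a,c)=d(b,c)=2$: then $(2,[a])$ and $(1,[b])$ are layer points whose join is $(2,[a])$, so $u = 2$, yet $[a]_{w} = [b]_{w}$ for all $w \geq 1$. So in the comparable case the statement you derive is not a contradiction, and your proof does not close --- this is precisely the case the paper handles separately with its ``Otherwise, $s=u$ or $t=u$'' clause.

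The repair is short. Either dispose of the comparable case first: if $u = s$ or $u = t$, then $(u,[z])$ equals $(s,[x])$ or $(t,[y])$ and is a layer point by hypothesis; when $u > \max(s,t)$ your argument then runs correctly, since your monotonicity remark shows that a merge at any $v < u$ produces an upper bound at parameter $\max(s,t,v) < u$, contradicting leastness. Alternatively, keep the uniform setup but split on $v$: if $v \geq \max(s,t)$, then $[x]_{s} \subseteq [z]_{u} = [w]_{v}$ and likewise $[y]_{t} \subseteq [w]_{v}$, so $(v,[w])$ is an upper bound of both points strictly below the least upper bound, a contradiction; if $v < \max(s,t)$, say $s = \max(s,t)$, then $x \in [w]_{v}$ gives $[x]_{v} = [w]_{v} = [z]_{u} \supseteq [x]_{s} \supseteq [x]_{v}$, hence $[x]_{v} = [x]_{s}$ with $v < s$, contradicting the hypothesis that $(s,[x])$ is a layer point. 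Your remaining observations --- that $y \in [w]_{v}$ guarantees $y$ is a vertex of $L_{v,k}(X)$ in the degree Rips setting, and that the finite-family statement follows by induction from the two-point case --- are correct as stated.
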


\begin{proof}
If there is a number $v$ such that $s,t<v<u$, then $(v,[x])$ and $(v,[y])$ are distinct because $(u,[z])$ is a least upper bound. This implies that $L_{v,k}(X)$ has distinct path components $[w]$ which map to $[z]$ in $\pi_{0}L_{u,k}(X)$. It follows that $(u,[z])$ is a branch point, and is therefore a layer point by L:emma \ref{lem 2}.

  Otherwise, $s=u$ or $t=u$, in which case $(u,[z]) = (s,[x])$ or $(u,[z])=(t,[y])$. In either case, $(u,[z])$ is a layer point.
\end{proof}

Lemma \ref{lem 4} implies that every collection of layer points $(s_{1},[x_{1}]), \dots , (s_{p},[x_{p}])$ has a least upper bound
\begin{equation*}
  (s_{1},[x_{1}]) \cup \dots \cup (s_{p},[x_{p}])
\end{equation*}
in $\L_{k}(X)$. The maximal (or terminal) element of $L_{k}(X)$ is the least upper bound of all members of $L_{k}(X)$.

It follows from Lemma \ref{lem 4} and the corresponding result for branch points of \cite{br-pts} that the poset inclusions
\begin{equation*}
  \Br_{k}(X) \subseteq \L_{k}(X) \subset \Gamma_{k}(X)
\end{equation*}
preserve least upper bounds.

\begin{lemma}\label{lem 5}
Every vertex $(s,[x])$ of $\Gamma_{k}(X)$ has a unique largest layer point $(t,[y])$ such that $(t,[y]) \leq (s,[x])$. In this case, $[y]_{t} = [x]_{s}$.
\end{lemma}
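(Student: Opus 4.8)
The plan is to produce the layer point directly as the minimal vertex below $(s,[x])$ that still carries the full subset $[x]_s$, and then to show both that it is a layer point and that it sits above every layer point under $(s,[x])$.

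First I would consider the collection $S$ of all vertices $(u,[z]) \le (s,[x])$ with $[z]_u = [x]_s$ as subsets of $X$. This is non-empty, since $(s,[x]) \in S$, and it is totally ordered: any two members have equal underlying sets, so the one with the smaller parameter lies below the other. As only finitely many phase-change numbers occur, $S$ is finite and has a least element $(t,[y])$, which by construction satisfies $[y]_t = [x]_s$, the asserted equality.

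Next I would verify that $(t,[y])$ is a layer point. If it has no antecedent at a strictly smaller parameter it qualifies by the second clause of the definition; otherwise, for any $(r,[w]) \le (t,[y])$ with $r < t$ the inclusion $[w]_r \subseteq [y]_t$ cannot be an equality, for then $(r,[w])$ would belong to $S$ strictly below $(t,[y])$, contradicting minimality. So each such $[w]_r$ is a proper subset of $[y]_t$ and the first clause applies.

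The real work — and the step I expect to be the main obstacle — is maximality: that any layer point $(t',[y']) \le (s,[x])$ satisfies $(t',[y']) \le (t,[y])$. For this I would invoke the least-upper-bound calculus of Lemma \ref{lem 4}. Setting $(v,[c]) = (t,[y]) \cup (t',[y'])$, which lies below $(s,[x])$ because $(s,[x])$ bounds both summands, the inclusions $[x]_s = [y]_t \subseteq [c]_v \subseteq [x]_s$ show $(v,[c]) \in S$ with $[c]_v = [x]_s$. If $t' > t$, then tracking the component $[y]$ up to the parameter $t'$ (legitimate since vertex sets only grow with the parameter, and $t' \le v$) gives $[y]_t \subseteq [y]_{t'} \subseteq [c]_v = [y]_t$, so $[y]_{t'} = [c]_v$; as $[y']_{t'} \subseteq [c]_v = [y]_{t'}$ and distinct path components at a fixed parameter are disjoint, $(t',[y']) = (t',[y])$, whence $(t,[y]) \le (t',[y])$ is an antecedent at a strictly smaller parameter with the same underlying set $[x]_s$, contradicting that $(t',[y'])$ is a layer point. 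Hence $t' \le t$, and then $[y']_{t'} \subseteq [c]_v = [y]_t$ together with $t' \le t$ yields $(t',[y']) \le (t,[y])$. Thus $(t,[y])$ dominates every layer point below $(s,[x])$ and, being one itself, is the unique largest such point.
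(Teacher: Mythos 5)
Your proof is correct, and its first half --- taking the minimal-parameter vertex below $(s,[x])$ whose underlying component equals $[x]_{s}$ as a subset of $X$, and deducing the layer-point property from that minimality --- is exactly the paper's construction. Where you diverge is the maximality step, which you route through the join $(t,[y]) \cup (t',[y'])$ and a case analysis on $t'$ versus $t$; the paper does this directly in one line: if $(u,[z]) \leq (s,[x])$ is any layer point, then $[z]_{u} \subseteq [x]_{s} = [y]_{t}$ gives $z \in [y]_{t}$, and if $u > t$ this forces $[z]_{t} = [y]_{t} = [x]_{s} \supseteq [z]_{u} \supseteq [z]_{t}$, so that $(u,[z])$ has an antecedent at a strictly smaller parameter with the same underlying set and is not a layer point; hence $u \leq t$ and $(u,[z]) \leq (t,[y])$. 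Your join argument establishes the same containments, but observe that you never actually use the conclusion of Lemma \ref{lem 4} (that the join of two \emph{layer} points is a layer point) --- you only need the existence of least upper bounds of arbitrary vertices in the tree $\Gamma_{k}(X)$, which the paper establishes before Lemma \ref{lem 4} --- so that citation is superfluous machinery, and the direct membership argument avoids the join entirely. One small imprecision to fix: $S$ is not finite, since the parameter $u$ ranges over real intervals (the complex $L_{u,k}(X)$ is constant on each interval $[t_{i},t_{i+1})$ between phase change numbers); what is true, and what your argument needs, is that the infimum of the parameters occurring in $S$ is attained at one of the finitely many phase change numbers, which is precisely how the paper phrases the existence step.
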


\begin{proof}
  There is a smallest phase change number $t$ such that there is a relation $(t,[y]) \leq (s,[x])$ with $[y]_{t} = [x]_{s}$. The corresponding point $(t,[y])$ is a layer point, by the minimality of the phase change number $t$.

  The point $(t,[y])$ is also an upper bound on the layer points below $(s,[x])$, since $[y]_{t}=[x]_{s}$: if $(u,[z])$ is a layer point below $(s,[x])$, then $z \in [y]_{t}$ and $u \leq t$ since otherwise $(u,[z])$ is not a layer point.
  \end{proof}

The first statement of Lemma \ref{lem 5} is also a corollary of Lemma \ref{lem 4}: take the least upper bound of all layer points below $(s,[x])$.

\begin{lemma}\label{lem 6}
The poset inclusion $\L_{k}(X) \subset \Gamma_{k}(X)$ has an inverse
\begin{equation*}
  \max: \Gamma_{k}(X) \to \L_{k}(X),
\end{equation*}
up to homotopy, and $\L _{k}(X)$ is a strong deformation retract of $\Gamma_{k}(X)$.
\end{lemma}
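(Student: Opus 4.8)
The plan is to take $\max$ to be the assignment furnished by Lemma \ref{lem 5}: for a vertex $(s,[x])$ of $\Gamma_{k}(X)$, let $\max(s,[x])$ be the unique largest layer point below $(s,[x])$, namely the point $(t,[y])$ with $[y]_{t}=[x]_{s}$ and $t$ the smallest phase change number carrying the set $[x]_{s}$ as a path component. The first thing to check is that $\max$ is order preserving. Suppose $(s,[x]) \leq (s',[x'])$, so that $[x]_{s} \subseteq [x']_{s'}$ as subsets of $X$, and write $\max(s,[x])=(t,[y])$ and $\max(s',[x'])=(t',[y'])$. Then $[y]_{t}=[x]_{s} \subseteq [x']_{s'}=[y']_{t'}$, so it remains only to see that $t \leq t'$. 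Since $x$ is a common element of both components, the path component of $x$ is a non-decreasing function of the parameter; it equals $[x]_{s}$ at $t$ and equals $[x']_{s'}$ at $t'$. If $t' \leq t$ then monotonicity would force $[x']_{s'} \subseteq [x]_{s}$, which together with $[x]_{s} \subseteq [x']_{s'}$ is consistent only when the two sets agree and $t=t'$; otherwise $t < t'$. In every case $t \leq t'$, hence $\max(s,[x]) \leq \max(s',[x'])$ and $\max$ is a poset morphism.

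Next I would record the two facts that exhibit $\max$ as a homotopy inverse of the inclusion $i \colon \L_{k}(X) \subset \Gamma_{k}(X)$. If $(t,[y])$ is already a layer point, then the largest layer point below it is itself, so $\max \circ i = \mathrm{id}$ on $\L_{k}(X)$. In the other direction, Lemma \ref{lem 5} supplies, for every vertex $(s,[x])$, the relation $i(\max(s,[x])) = \max(s,[x]) \leq (s,[x])$, and since $\max$ is order preserving this is precisely a natural transformation $i \circ \max \Rightarrow \mathrm{id}_{\Gamma_{k}(X)}$ of poset morphisms.

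To finish I would pass to nerves. A natural transformation between poset maps is a functor out of $\Gamma_{k}(X) \times \{0 \leq 1\}$, and applying the nerve functor converts it into a homotopy between the map induced by $i \circ \max$ and the identity, so $i$ and $\max$ are mutually inverse up to homotopy. For the strong deformation retract statement I would verify that this homotopy is stationary on $\L_{k}(X)$: for a layer point $(t,[y])$ the comparison $i(\max(t,[y])) \leq (t,[y])$ is the identity relation $(t,[y]) \leq (t,[y])$, so the homotopy restricted to the subposet is constant in the $\Delta^{1}$ direction, which is exactly the condition defining a strong deformation retract. The one genuinely non-formal step, and the step I expect to be the main obstacle, is the order-preservation of $\max$ in the case $[x]_{s} \subsetneq [x']_{s'}$; everything afterward is the standard translation of the natural transformation $i \circ \max \Rightarrow \mathrm{id}$ of Lemma \ref{lem 5} into a homotopy.
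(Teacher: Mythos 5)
Your proposal is correct and takes essentially the same route as the paper: define $\max$ by Lemma \ref{lem 5}, observe $\max$ restricts to the identity on $\L_{k}(X)$, and use the natural relations $\max(s,[x]) \leq (s,[x])$, which are identities on layer points, as the contracting homotopy. The one step you flag as the main obstacle---order-preservation of $\max$---the paper dispatches in one line by maximality (if $(s,[x]) \leq (s',[x'])$ then $\max(s,[x])$ is a layer point below $(s',[x'])$, hence $\leq \max(s',[x'])$), which is cleaner than your phase-change-number case analysis, though your version is also valid.
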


\begin{proof}
Every vertex $(s,[x])$ of $\Gamma_{k}(X)$ has a unique maximal layer point $(s_{0},[x_{0}])$ such that $(s_{0},[x_{0}]) \leq (s,[x])$, by Lemma \ref{lem 5}. Set
\begin{equation*}
  \max(s,[x]) = (s_{0},[x_{0}]).
\end{equation*}
The maximality condition implies that the function $\max$ preserves the ordering. The composite
$\max \cdot \alpha$ is the identity on $\L_{k}(X)$, and the relations $(s_{0},[x_{0}]) \leq (s,x)$ define a homotopy $\alpha \cdot \max \leq 1$ that restricts to the identity on $\L_{k}(X)$.
\end{proof}

\begin{remark}
  Lemma 5 of \cite{br-pts} says that every $(s,[x])$ has a unique maximal branch point $(s_{1},[x_{1}])$ such that $(s_{1},[x]_{1}) \leq (s,[x])$. The branch point $(s_{1},[x_{1}])$ is a layer point by Lemma \ref{lem 2}, so that there are relations.
  \begin{equation*}
    (s_{1},[x_{1}]) \leq (s_{0},[x_{0}]) \leq (s,[x]),
  \end{equation*}
  which are natural in points $(s,[x])$ of $\Gamma_{k}(X)$.

  It follows that the poset inclusions
  \begin{equation*}
    \Br_{k}(X) \subseteq \L_{k}(X) \subset \Gamma_{k}(X)
  \end{equation*}
  define strong deformation retractions, and that the respective contracting homotopies are compatible.

  Recall from Lemma \ref{lem 3} that $\Br_{0}(X) = \L_{0}(X)$, so that the discussion simplifies for Vietoris-Rips complexes.
  \end{remark}

\section{Stability}

The general setup for stability of degree Rips complexes is the following: we suppose given finite metric spaces $X \subset Y$ such that the Hausdorff distance between the corresponding spaces $X_{dis}^{k}$ and $Y_{dis}^{k}$ of sets of $k+1$ distinct elements in $X$ and $Y$ respectively satisfies $d_{H}(X^{k}_{dis},Y^{k}_{dis}) < r$, where $r$ is a fixed non-zero positive real number.

Under these assumptions, the degree Rips stability theorem (Theorem 4 of \cite{persist-htpy}) says that there are homotopy commutative diagrams
\begin{equation}\label{eq 3}
  \xymatrix{
    L_{s,k}(X) \ar[r]^{\sigma} \ar[d]_{i} & L_{s+2r,k}(X) \ar[d]^{i} \\
    L_{s,k}(Y) \ar[r]_{\sigma} \ar[ur]^{\theta} & L_{s+2r,k}(Y)
  }
  \end{equation}
Applying the path component functor $\pi_{0}$ gives commutative diagrams
\begin{equation}\label{eq 4}
  \xymatrix{
    \pi_{0}L_{s,k}(X) \ar[r]^{\sigma} \ar[d]_{i} & \pi_{0}L_{s+2r,k}(X) \ar[d]^{i} \\
    \pi_{0}L_{s,k}(Y) \ar[r]_{\sigma} \ar[ur]^{\theta} & \pi_{0}L_{s+2r,k}(Y)
  }
  \end{equation}
and there is an induced commutative diagram of hierarchies
\begin{equation}\label{eq 5}
  \xymatrix{
    \Gamma_{k}(X) \ar[r]^{\sigma} \ar[d]_{i} & \Gamma_{k}(X) \ar[d]^{i} \\
    \Gamma_{k}(Y) \ar[r]_{\sigma} \ar[ur]^{\theta} & \Gamma_{k}(Y)
  }
\end{equation}
Here,
\begin{equation*}
  \begin{aligned}
    &i((s,[x])) = (s,[i(x)]),\\
    &\sigma((s,[x])) = (s+2r,[\sigma(x)]),\ \text{and}\\
    &\theta((s,[y]) = (s+2r,[\theta(y)]).
  \end{aligned}
\end{equation*}

Write $i_{\ast}: \L_{k}(X) \to \L_{k}(Y)$ for the composite poset morphism
\begin{equation*}
  \L_{k}(X) \subset \Gamma_{k}(X) \xrightarrow{i_{\ast}} \Gamma_{k}(Y) \xrightarrow{\max} \L_{k}(Y)
  \end{equation*}
This map takes a layer point $(s,[x])$ to the maximal layer point below $(s,[i(x)])$. 

Poset morphisms $\theta_{\ast}: \L_{k}(Y) \to \L_{k}(X)$ and $\sigma_{\ast}: \L_{k}(X) \to \L_{k}(X)$ are similarly defined, respectively, by the poset morphisms $\theta: \Gamma_{k}(Y) \to \Gamma_{k}(X)$ and the shift morphism $\sigma: \Gamma_{k}(X) \to \Gamma_{k}(X)$.
\medskip

\noindent
1)\ Consider the poset maps
\begin{equation*}
  \L_{k}(X) \xrightarrow{i_{\ast}} \L_{k}(Y) \xrightarrow{\theta_{\ast}} \L_{k}(X).
\end{equation*}

If $(s,[x])$ is a layer point for $X$, choose maximal layer points $(s_{0},[x_{0}]) \leq (s,[i(x)]$, $(s_{1},[x_{1}]) \leq (s_{0}+2r,[\theta(x_{0})])$ and $(v,[y]) \leq (s+2r,[x])$ below the respective objects.

Then $\theta_{\ast}i_{\ast}(s,[x]) = (s_{1},[x_{1}])$, and there is a natural relation
\begin{equation*}
  \theta_{\ast}i_{\ast}(s,[x]) = (s_{1},[x_{1}]) \leq (v,[y]) = \sigma_{\ast}(s,[x]) 
\end{equation*}
by a maximality argument.
We therefore have a homotopy of poset maps
\begin{equation}\label{eq 6}
  \theta_{\ast}i_{\ast} \leq \sigma_{\ast}: \L_{k}(X) \to \L_{k}(X).
\end{equation}

\noindent
2)\ Similarly, if $(t,[y])$ is a layer point of $Y$, then
\begin{equation*}
  i_{\ast}\theta_{\ast}(t,[y]) \leq \sigma_{\ast}(t,[y]),
\end{equation*}
giving a homotopy
\begin{equation}\label{eq 7}
  i_{\ast}\theta_{\ast} \leq \sigma_{\ast}: \L_{k}(Y) \to \L_{k}(Y).
  \end{equation}

There are relations
\begin{equation}
  (s,[x]) \leq \sigma_{\ast}(s,[x]) \leq (s+2r,[x])
\end{equation}
for branch points $(s,[x])$.
It follows that the poset map $\sigma_{\ast}: \L_{k}(X) \to \L_{k}(X)$ is homotopic to the identity on $\L_{k}(X)$.
\medskip

The construction of the poset maps $i_{\ast}$, $\theta_{\ast}$ and $\sigma_{\ast}$, together with the relations (\ref{eq 6}) and (\ref{eq 7}), complete the construction/proof of the following result:

\begin{theorem}\label{th 8}
Suppose that $X \subset Y$ is an inclusion of finite metric spaces, and that $d_{H}(X^{k}_{dis},Y^{k}_{dis}) < r$. Then there is a homotopy commutative diagram
\begin{equation}\label{eq 9} 
  \xymatrix{
  L_{k}(X) \ar[r]^{\sigma_{\ast}} \ar[d]_{i_{\ast}} & L_{k}(X) \ar[d]^{i_{\ast}} \\
  L_{k}(Y) \ar[ur]^{\theta_{\ast}} \ar[r]_{\sigma_{\ast}} & L_{k}(Y)
    }
\end{equation}
that relates the layer posets $\L_{k}(X)$ and $\L_{k}(Y)$ of the spaces $X$ and $Y$, respectively.
\end{theorem}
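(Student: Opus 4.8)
The plan is to descend the entire diagram from the commutative diagram (\ref{eq 5}) of hierarchies along the retraction $\max$ of Lemma \ref{lem 6}, and then to recognize that homotopy commutativity of poset maps is precisely a pointwise order relation between them. I would begin by recording the exact identities carried by (\ref{eq 5}). The homotopy commutative squares and triangles of (\ref{eq 3}) become \emph{strict} equalities after applying $\pi_{0}$, since $\pi_{0}$ identifies homotopic maps; hence diagram (\ref{eq 4}) commutes on the nose, and the induced poset morphisms of $\Gamma_{k}$ satisfy the two triangle identities $\theta \cdot i = \sigma$ on $\Gamma_{k}(X)$ and $i \cdot \theta = \sigma$ on $\Gamma_{k}(Y)$. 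Concretely, $\theta i(s,[x]) = (s+2r,[\theta(i(x))])$ and $\sigma(s,[x]) = (s+2r,[x])$ agree because $[\theta(i(x))]_{s+2r} = [x]_{s+2r}$ in $\pi_{0}L_{s+2r,k}(X)$.

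Next I would unwind the layer-poset maps as $\max$-composites of the (order-preserving) $\Gamma_{k}$-morphisms: for a layer point $(s,[x])$ of $X$ one has $i_{\ast}(s,[x]) = \max(i(s,[x]))$ and $\sigma_{\ast}(s,[x]) = \max(\sigma(s,[x]))$, with $\theta_{\ast}$ defined analogously on $\L_{k}(Y)$. The two structural facts I would lean on are exactly those furnished by Lemma \ref{lem 6}: the map $\max$ is order-preserving, and it is deflationary, i.e. $\max(z) \leq z$ for every vertex $z$ of $\Gamma_{k}$.

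The core step is the maximality argument producing the relation (\ref{eq 6}). Starting from the deflationary inequality $i_{\ast}(s,[x]) = \max(i(s,[x])) \leq i(s,[x])$ in $\Gamma_{k}(Y)$, I apply the order-preserving map $\theta$, then the order-preserving map $\max$, and finally substitute the identity $\theta \cdot i = \sigma$ on $\Gamma_{k}(X)$ to obtain
\[
  \theta_{\ast} i_{\ast}(s,[x]) = \max\bigl(\theta(i_{\ast}(s,[x]))\bigr) \leq \max\bigl(\theta(i(s,[x]))\bigr) = \max\bigl(\sigma(s,[x])\bigr) = \sigma_{\ast}(s,[x]).
\]
This inequality is natural in $(s,[x])$, and the mirror-image computation on $\L_{k}(Y)$, using $i \cdot \theta = \sigma$, gives the relation (\ref{eq 7}), namely $i_{\ast}\theta_{\ast} \leq \sigma_{\ast}$.

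Finally I would convert these pointwise relations into the asserted homotopy commutativity. In the nerve of a poset, an order relation $f \leq g$ between order-preserving maps is exactly a natural transformation and therefore induces a homotopy between the corresponding maps of nerves; the two inequalities above are thus precisely the homotopies filling the two triangles of (\ref{eq 9}). I expect the only delicate point to be the bookkeeping at the $\Gamma_{k}$ level --- verifying that the stability square and triangles really do become strict equalities under $\pi_{0}$, so that the maximality argument has genuine identities (not mere homotopies) to feed on. Once those identities are secured, the order-preserving and deflationary properties of $\max$ render the remainder of the argument entirely formal.
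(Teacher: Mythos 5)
Your proposal is correct and follows essentially the same route as the paper: the paper likewise defines $i_{\ast}$, $\theta_{\ast}$ and $\sigma_{\ast}$ as $\max$-composites of the strictly commuting $\Gamma_{k}$-level maps obtained by applying $\pi_{0}$ to the stability diagram, and derives the relations $\theta_{\ast}i_{\ast} \leq \sigma_{\ast}$ and $i_{\ast}\theta_{\ast} \leq \sigma_{\ast}$ by a maximality argument, which your chain of inequalities using the order-preserving and deflationary properties of $\max$ simply makes explicit. The only difference is presentational: the paper phrases the key step by choosing maximal layer points below the relevant vertices, whereas you package the same comparison as the formal identity $\max(\theta(\max(i(s,[x])))) \leq \max(\theta(i(s,[x]))) = \max(\sigma(s,[x]))$.
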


\begin{remark}
  The element $\sigma_{\ast}(s,[x]) = (t,[x])$ is close to $(s,[x])$ in the sense that there are relations
  \begin{equation*}
    (s,[x]) \leq (t,[x]) \leq (s+2r,[x])
    \end{equation*}
so that
  $0 \leq t-s \leq 2r$. Thus, the layer points $(s,[x])$ and $\theta_{\ast}i_{\ast}(s,[x])$ have a common upper bound, namely $\sigma_{\ast}(s,[x])$, which is close to $(s,[x])$.

If $(t,[y])$ is a layer point of $\Gamma_{k}(Y)$, the layer point $\sigma_{\ast}(t,[y]) \leq (t+2r,[y])$ is similarly an upper bound for $(t,[y])$ and $i_{\ast}\theta_{\ast}(t,[y])$, and is close to $(t,[y])$.

The subobject of $\L_{k}(X)$ consisting of all layer points of the form $(s,[x])$ as $s$ varies has an obvious notion of distance: the distance between points $(s,[x])$ and $(t,[x])$ is $\vert t-s \vert$.  
\end{remark}

Suppose that
\begin{equation*}
  0 < t_{1} < \dots < t_{k}
  \end{equation*}
are the phase change numbers for the system $L_{s,k}(X)$. 

The assumption that $d_{H}(X^{k}_{dis},Y^{k}_{dis}) < r$ forces the function
\begin{equation*}
  \pi_{0}L_{s,k}(X) \to \pi_{0}L_{s,k}(Y)
\end{equation*}
to be surjective if $s \geq r$.

       \begin{lemma}\label{lem 10}
         Suppose, that $y_{1},y_{2} \in Y$ have elements $\theta(y_{1}), \theta(y_{2}) \in X$ such that $d(y_{i},\theta(y_{i})) < r$. Then $d(y_{1},y_{2})$ is in the interval $(t-2r,t+2r)$, where $t=d(\theta(y_{1}),\theta(y_{2}))$.
       \end{lemma}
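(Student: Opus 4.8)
The plan is to treat this as a pure metric-space estimate: since $X \subset Y$, all four points $y_1, y_2, \theta(y_1), \theta(y_2)$ lie in $Y$, so I can apply the triangle inequality for the metric $d$ on $Y$ to any triple among them. The statement $d(y_1,y_2) \in (t-2r, t+2r)$ is just the assertion that $\lvert d(y_1,y_2) - t \rvert < 2r$, and this will follow from two applications of the triangle inequality, one for each of the two bounds.

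First I would establish the upper bound. Routing the path from $y_1$ to $y_2$ through the two approximating points $\theta(y_1)$ and $\theta(y_2)$ gives
\begin{equation*}
  d(y_1,y_2) \leq d(y_1,\theta(y_1)) + d(\theta(y_1),\theta(y_2)) + d(\theta(y_2),y_2) < r + t + r = t + 2r,
\end{equation*}
using the hypothesis $d(y_i,\theta(y_i)) < r$ on the two outer terms and the definition $t = d(\theta(y_1),\theta(y_2))$ on the middle term.

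For the lower bound I would instead bound $t$ from above by routing the path from $\theta(y_1)$ to $\theta(y_2)$ through $y_1$ and $y_2$:
\begin{equation*}
  t = d(\theta(y_1),\theta(y_2)) \leq d(\theta(y_1),y_1) + d(y_1,y_2) + d(y_2,\theta(y_2)) < r + d(y_1,y_2) + r,
\end{equation*}
and then rearrange to obtain $d(y_1,y_2) > t - 2r$. Combining the two displays yields $t - 2r < d(y_1,y_2) < t + 2r$, which is the claim.

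There is no substantive obstacle here; the argument is a routine double use of the triangle inequality, and the only points requiring minor care are that all four points genuinely live in a common metric space (guaranteed by $X \subset Y$) and that the strict inequalities $d(y_i,\theta(y_i)) < r$ propagate to strict inequalities in both bounds, so the conclusion lands in the open interval $(t-2r, t+2r)$ rather than its closure.
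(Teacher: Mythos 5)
Your proof is correct, and for the lower bound it takes a genuinely different (and more robust) route than the paper. The paper only argues the upper bound $d(y_1,y_2) < t+2r$ via the triangle inequality, as you do; for the lower bound it instead draws a planar picture with auxiliary points $z_1, z_2$ and an intersection point $v$ of the line segments $(z_1,z_2)$ and $(\theta(y_1),\theta(y_2))$, and reads off $d(\theta(y_1),\theta(y_2)) \geq d(z_1,z_2) \geq d(y_1,y_2) - 2r$ from the figure. That geometric argument implicitly presupposes an ambient Euclidean-like structure (lines, intersection points) that an abstract finite metric space does not have, and the paper accordingly hedges with the opening assumption $t - 2r > 0$. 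Your symmetric second application of the triangle inequality --- bounding $t \leq d(\theta(y_1),y_1) + d(y_1,y_2) + d(y_2,\theta(y_2)) < d(y_1,y_2) + 2r$ and rearranging --- proves the same strict inequality in any metric space, with no picture, no auxiliary points, and no case distinction on the sign of $t-2r$ (when $t - 2r \leq 0$ your rearranged inequality still holds trivially since distances are nonnegative). In short, your argument is both more elementary and strictly more general than the paper's, and it is the standard proof one would expect for this statement.
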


       \begin{proof}
         We shall assume that $t-2r > 0$.

         Consider the picture
         \begin{equation*}
         \xymatrix{
           & \theta(y_{1}) \ar@{-}[ddrrrr] \ar@{-}[d] \\
           y_{1} \ar@{-}[ur] \ar@{-}[r] & z_{1} \ar@{-}[rrrr] &&&& z_{2} \ar@{-}[r] & y_{2} \\
           &&&&& \theta(y_{2}) \ar@{-}[u] \ar@{-}[ur] \\
          }
         \end{equation*}
         Suppose that $v$ is the point of intersection of the lines $(z_{1},z_{2})$ and $(\theta(y_{1}),\theta(y_{2}))$. Then
         \begin{equation*}
           d(\theta(y_{1}),\theta(y_{2})) \geq d(z_{1},z_{2}) = d(z_{1},v) + d(v,z_{2}) \geq d(y_{1},y_{2}) - 2r.
           \end{equation*}
The assertion that $d(\theta(y_{1}),\theta(y_{2})) < d(y_{1},y_{2}) +2r$ is a simple application of the triangle inequality.
       \end{proof}

       \begin{corollary}\label{cor 11}
All phase change numbers $s$ for $Y$ lie in intervals $(t-2r,t+2r)$ around phase change numbers $t$ of $X$.
         \end{corollary}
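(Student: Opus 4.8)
The plan is to read off Corollary \ref{cor 11} directly from Lemma \ref{lem 10}, using the fact that the phase change numbers of either space are precisely the positive pairwise distances between vertices of its stabilized complex $L_{\infty,k}(-)$, as introduced in Section 1. First I would fix a phase change number $s$ for $Y$ and write it as $s = d(y_1,y_2)$ for a pair of distinct vertices $y_1,y_2$ of $L_{\infty,k}(Y)$. The goal is to produce a phase change number $t$ of $X$ with $s \in (t-2r,t+2r)$, and Lemma \ref{lem 10} already supplies exactly such an estimate once I have points $\theta(y_1),\theta(y_2) \in X$ with $d(y_i,\theta(y_i)) < r$: it gives $s = d(y_1,y_2) \in (t-2r,t+2r)$ with $t = d(\theta(y_1),\theta(y_2))$.

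The remaining step is to check that $t = d(\theta(y_1),\theta(y_2))$ is genuinely a phase change number of $X$, that is, a positive pairwise distance between vertices of $L_{\infty,k}(X)$. Positivity requires $\theta(y_1) \neq \theta(y_2)$, and this is where the Hausdorff hypothesis $d_{H}(X^{k}_{dis},Y^{k}_{dis}) < r$ should be used in its full form, rather than merely as the existence of a nearest-point contraction $Y \to X$. I would extend $\{y_1,y_2\}$ to a set of $k+1$ distinct vertices of $L_{\infty,k}(Y)$ and use the Hausdorff bound on the spaces of $(k+1)$-point configurations to match it to a set of $k+1$ distinct points of $X$, with each matched pair at distance less than $r$; taking $\theta(y_1),\theta(y_2)$ to be the partners of $y_1,y_2$ then forces $\theta(y_1) \neq \theta(y_2)$, since the matched configuration is again a set of distinct points. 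As $\theta(y_1),\theta(y_2)$ are then distinct vertices of the stabilized complex $L_{\infty,k}(X)$, the number $t = d(\theta(y_1),\theta(y_2))$ is one of the listed pairwise distances, hence a phase change number of $X$, and the corollary follows.

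I expect the main obstacle to be exactly this distinctness point. An arbitrary contraction $\theta \colon Y \to X$ with $d(y,\theta(y)) < r$ could collapse a close pair $y_1,y_2$ (one with $s = d(y_1,y_2) < 2r$) to a single point, producing $t = 0$, which is excluded from the list $0 \neq t_1 < \dots < t_p$ of phase change numbers and would leave $s$ unaccounted for. The configuration-matching argument is what rules this out, so I would be careful to invoke the $(k+1)$-point Hausdorff condition rather than a pointwise approximation, and to note that at least $k+1$ vertices are available in $L_{\infty,k}(Y)$ to extend the pair. Once this is in place, the interval estimate is immediate from Lemma \ref{lem 10} and no further computation is needed.
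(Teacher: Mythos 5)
Your proof is correct and takes the same route the paper intends: Corollary \ref{cor 11} is stated without a separate argument, being read off directly from Lemma \ref{lem 10} together with the characterization of phase change numbers as the positive pairwise distances between vertices of $L_{\infty,k}(-)$. Your additional care over the distinctness of $\theta(y_{1})$ and $\theta(y_{2})$ --- invoking the Hausdorff bound $d_{H}(X^{k}_{dis},Y^{k}_{dis}) < r$ on $(k+1)$-point configurations rather than a mere pointwise contraction $Y \to X$, so that $t = d(\theta(y_{1}),\theta(y_{2})) > 0$ is genuinely on the list of phase change numbers --- fills in a detail the paper leaves implicit, and is exactly the role the configuration-space hypothesis plays here.
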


       There is a finite collection of numbers $t$ such that $(t,[x])$ is a layer point for $\Gamma_{k}(X)$. Say that such numbers $t$ are the layer parameters for $X$.
Each layer parameter is a phase change number.
\medskip

       Observe that the inclusions $\sigma: L_{s,k}(X) \subseteq L_{t,k}(X)$ for $s \leq t$ induce inclusions $[x]_{s} \subset [x]_{t}$ for all vertices $x$ of $L_{s,k}(X)$.

       Recall from the proof of Lemma \ref{lem 5} that the maximal layer point below $(s,[x])$ can be constructed by finding the smallest phase change number $t$ such that there is a relations $(t,[u]) \leq (s,[x])$ such that $[u]_{t}=[x]_{s}$ as subsets of $X$.

       \begin{lemma}\label{lem 12}
         Suppose that $s < t$ and there are no layer points of the form $(u,[x])$ in $\Gamma_{k}(X)$, where $s < u \leq t$. Then the induced function
         \begin{equation*}
          \sigma_{\ast}: \pi_{0}L_{s,k}(X) \to \pi_{0}L_{t,k}(X)
         \end{equation*}
         is a bijection.
         \end{lemma}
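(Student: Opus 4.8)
The map $\sigma_{\ast}$ here is the canonical function $[x]_{s} \mapsto [x]_{t}$ induced by the inclusion $L_{s,k}(X) \subseteq L_{t,k}(X)$. The plan is to prove surjectivity and injectivity separately, and in each case to show that a failure of bijectivity would manufacture a layer point whose parameter lies in the forbidden range $(s,t]$.

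For surjectivity I would start with an arbitrary component $[w]_{t} \in \pi_{0}L_{t,k}(X)$ and apply Lemma \ref{lem 5} to the vertex $(t,[w])$, obtaining its maximal layer point $(u_{0},[w_{0}]) \leq (t,[w])$ with $[w_{0}]_{u_{0}} = [w]_{t}$ as subsets of $X$. The relation $(u_{0},[w_{0}]) \leq (t,[w])$ forces $u_{0} \leq t$. If $u_{0} > s$, then $(u_{0},[w_{0}])$ is a layer point with $s < u_{0} \leq t$, contradicting the hypothesis; hence $u_{0} \leq s$. In that case $w_{0}$ is already a vertex of $L_{s,k}(X)$, and the chain $[w_{0}]_{u_{0}} \subseteq [w_{0}]_{s} \subseteq [w_{0}]_{t} = [w]_{t} = [w_{0}]_{u_{0}}$ collapses to equalities, giving $[w_{0}]_{s} = [w]_{t}$ and therefore $\sigma_{\ast}([w_{0}]_{s}) = [w]_{t}$.

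For injectivity I would suppose that $[x]_{s}$ and $[x']_{s}$ are distinct components with $[x]_{t} = [x']_{t}$, and form their least upper bound $(u,[z]) = (s,[x]) \cup (s,[x'])$, where $u$ is the smallest phase change number with $[x]_{u} = [x']_{u}$. Distinctness at parameter $s$ forces $u > s$, while $[x]_{t}=[x']_{t}$ forces $u \leq t$. The decisive step is to recognise $(u,[z])$ as a branch point: for every $w$ with $s \leq w < u$ the components $[x]_{w}$ and $[x']_{w}$ are distinct by minimality of $u$, and both satisfy $[x]_{w},[x']_{w} \subseteq [x]_{u} = [z]_{u}$, so they lie below $(u,[z])$. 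Thus condition 1) in the definition of a branch point holds with $s_{0}=s$, and by Lemma \ref{lem 2} the branch point $(u,[z])$ is a layer point with parameter $s < u \leq t$, contradicting the hypothesis; hence $[x]_{s} = [x']_{s}$.

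The main obstacle is the injectivity step, specifically verifying that the join $(s,[x]) \cup (s,[x'])$ is itself a layer point. The cleanest route, as above, is to observe that two distinct antecedent components persist on the entire half-open interval $[s,u)$, so that the join is in fact a branch point and Lemma \ref{lem 2} applies directly; the surjectivity half is essentially immediate once Lemma \ref{lem 5} is invoked.
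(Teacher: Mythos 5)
Your proof is correct, and its surjectivity half is essentially the paper's: both apply Lemma \ref{lem 5} to a component $(t,[w])$, use the hypothesis to force the maximal layer point's parameter down to $u_{0} \leq s$, and collapse the resulting chain of subsets of $X$ to get $[w_{0}]_{s} = [w]_{t}$. Where you genuinely diverge is injectivity. The paper extracts injectivity for free from the subset identity already established in the surjectivity step: since the maximal layer point $(u,[y])$ below $(t,[x])$ satisfies $[y]_{s} = [x]_{t}$ \emph{as subsets of $X$}, any component $[y_{1}]_{s}$ mapping to $[x]_{t}$ has $y_{1} \in [x]_{t} = [y]_{s}$, hence $[y_{1}]_{s} = [y]_{s} = [x]_{t}$, so any two preimages coincide --- a two-line argument using nothing beyond Lemma \ref{lem 5}. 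You instead form the join $(u,[z]) = (s,[x]) \cup (s,[x'])$ of two hypothetical distinct preimages and show it is a branch point with $s < u \leq t$, then invoke Lemma \ref{lem 2} to produce a forbidden layer point. Your verification is sound: the persistence of the two distinct components $[x]_{w} \neq [x']_{w}$ on all of $[s,u)$, with both below $(u,[z])$ since $[x]_{w} \subseteq [x]_{u} = [z]_{u}$, is exactly condition 1) of the branch point definition with $s_{0} = s$, and this is the same mechanism the paper uses in the proof of Lemma \ref{lem 4}. Note that your route additionally relies on the existence of least upper bounds, i.e.\ on the connectivity of $L_{s,k}(X)$ for $s$ large, which the paper established at the outset. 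What your route buys is conceptual transparency --- it localizes the obstruction to injectivity as a merge point in the forbidden interval $(s,t]$ --- at the cost of heavier machinery; the paper's route buys brevity by exploiting the fact that path components are literal subsets of $X$, so the single set-level equality $[y]_{s} = [x]_{t}$ does both jobs at once.
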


       \begin{proof}
         We can assume that $L_{t,k}(X) \ne \emptyset$, for otherwise $L_{s,k}(X) = L_{t,k}(X) = \emptyset$.

         Suppose that $(t,[x]) \in \Gamma_{k}(X)$ and that $(u,[y])$ is a maximal layer point with $(u,[y]) \leq (t,[x])$. Then $u \leq s$ and the relations $(u,[y]) \leq (s,[y]) \leq (t,[x])$ force $[y]_{s} = [x]_{t}$. In particular, the function $\sigma_{\ast}$ is surjective.
       
         If $[y_{1}],[y_{s}] \in \pi_{0}L_{s,k}(X)$ have the same image $[x] \in \pi_{0}L_{t,k}(X)$, then $[y_{1}]_{s} = [x]_{t} = [y_{2}]_{s}$ as subsets of $X$, so that $[y_{1}] = [y_{2}]$ in $\pi_{0}L_{s,k}(X)$, and so $\sigma_{\ast}$ is injective.
         \end{proof}

Given a layer parameter $t$ for $X$, write $t_{+}$ for the smallest layer parameter of $X$ with $t < t_{+}$, and write $t_{-}$ for the largest layer parameter of $X$ with $t_{-} < t$.

\begin{lemma}\label{lem 13}
  Suppose that $t$ is a layer parameter for $X$ such that $r < t < t_{+}-2r$.
  Then the function
  $i : \pi_{0}L_{t,k}(X) \to \pi_{0}L_{t,k}(Y)$ is a bijection.
\end{lemma}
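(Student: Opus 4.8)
The plan is to treat surjectivity and injectivity separately. Surjectivity is immediate: since $r < t$ we have $t \geq r$, so the map $i \colon \pi_{0}L_{t,k}(X) \to \pi_{0}L_{t,k}(Y)$ is surjective by the remark recorded just before Lemma \ref{lem 10}. All the content therefore lies in injectivity, and I would extract it from the commutative square (\ref{eq 4}) together with the bijection supplied by Lemma \ref{lem 12}.

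The crucial point is that the hypothesis $t < t_{+} - 2r$ is precisely what is needed to run Lemma \ref{lem 12} along the top edge of (\ref{eq 4}). Because $t_{+}$ is the least layer parameter of $X$ exceeding $t$ and $t + 2r < t_{+}$, there are no layer points $(u,[x])$ of $\Gamma_{k}(X)$ with $t < u \leq t + 2r$. Lemma \ref{lem 12}, applied with lower bound $t$ and upper bound $t + 2r$, then shows that the shift map $\sigma \colon \pi_{0}L_{t,k}(X) \to \pi_{0}L_{t+2r,k}(X)$ is a bijection, in particular injective. Now the triangle $\theta \circ i = \sigma$ of (\ref{eq 4}), instantiated at $s = t$, relates maps $\pi_{0}L_{t,k}(X) \to \pi_{0}L_{t+2r,k}(X)$. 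Hence if $i([x_{1}]) = i([x_{2}])$, applying $\theta$ gives $\sigma([x_{1}]) = \sigma([x_{2}])$, and injectivity of $\sigma$ forces $[x_{1}] = [x_{2}]$. Thus $i$ is injective, and together with surjectivity we obtain the asserted bijection.

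I expect the only delicate step to be bookkeeping rather than genuine difficulty. One must confirm that the strict inequality $t + 2r < t_{+}$ rules out layer parameters throughout the half-open interval $(t, t+2r]$, so that Lemma \ref{lem 12} applies verbatim; and one must note that the homotopy-commutative triangle of spaces in (\ref{eq 3}) descends to an honestly commutative triangle of sets after applying $\pi_{0}$, which is exactly what (\ref{eq 4}) records. The conceptual heart of the argument is the realization that no further Hausdorff-distance estimate is needed for injectivity: the single interval-bijection from Lemma \ref{lem 12}, combined with the factorization $\sigma = \theta \circ i$, already collapses any two components of $L_{t,k}(X)$ that become identified in $L_{t,k}(Y)$.
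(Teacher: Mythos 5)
Your proof is correct and is essentially identical to the paper's: the paper also deduces injectivity of $i$ from the commutative triangle $\theta \circ i = \sigma$ in (\ref{eq 4}) together with the bijectivity of $\sigma \colon \pi_{0}L_{t,k}(X) \to \pi_{0}L_{t+2r,k}(X)$ supplied by Lemma \ref{lem 12} (the hypothesis $t+2r < t_{+}$ ruling out layer parameters in $(t,t+2r]$), and it obtains surjectivity from $t > r$ via the remark preceding Lemma \ref{lem 10}. No gaps; your bookkeeping about the half-open interval and the strictness of the inequality matches what the paper's proof implicitly uses.
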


\begin{proof}
  The diagram
  \begin{equation*}
    \xymatrix{
      \pi_{0}L_{t,k}(X) \ar[r]^-{\cong} \ar[d]_{i} & \pi_{0}L_{t+2r,k}(X) \\
      \pi_{0}L_{t,k}(Y) \ar[ur]_{\theta}
    }
  \end{equation*}
  commutes, and the displayed function is a bijection by Lemma \ref{lem 12},
  so the function $i$ is injective. The surjectivity of $i$ follows from the assumption $t >r$.
\end{proof}

\vfill\eject

\begin{lemma}\label{lem 14}
  Suppose that $(t,[x])$ is a layer point of $\Gamma_{k}(X)$ with $r < t < t_{+}-2r$, and suppose that $(s,[y])$ is a maximal layer point below $(t,[i(x)])$ in $\Gamma_{k}(Y)$.
  Then $t-2r \leq s \leq t$.
  \end{lemma}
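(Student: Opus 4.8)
The inequality $s \le t$ is immediate: by the definition of the order on $\Gamma_{k}(Y)$, the relation $(s,[y]) \le (t,[i(x)])$ already forces $s \le t$. So the entire content is the lower bound $s \ge t-2r$, and the plan is to reduce it to a single connectivity statement in $X$. First I would record what the hypothesis $r < t < t_{+}-2r$ buys. By Lemma \ref{lem 13} the inclusion induces a bijection $i\colon \pi_{0}L_{t,k}(X) \to \pi_{0}L_{t,k}(Y)$, and by Lemma \ref{lem 12} there is no layer parameter of $X$ in $(t,t_{+})$, so the vertex set of the component of $x$ does not change on $[t,t+2r]$; in particular $[x]_{t} = [x]_{t+2r}$. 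Next, since $(s,[y])$ is the maximal layer point below $(t,[i(x)])$, Lemma \ref{lem 5} gives $[y]_{s} = [i(x)]_{t} =: C$, so that $C$ is a genuine path component of $L_{s,k}(Y)$ and $[x]_{t} \subseteq C$ as subsets of $Y$.

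The next step is to transport $C$ back to $X$ along the diagonal map $\theta$. Pushing the connected set $C = [y]_{s}$ forward under the simplicial map $\theta\colon L_{s,k}(Y) \to L_{s+2r,k}(X)$ lands it in a single component $D := [\theta(y)]_{s+2r}$ of $L_{s+2r,k}(X)$. Applying the poset map $\theta\colon \Gamma_{k}(Y) \to \Gamma_{k}(X)$ to the relation $(s,[y]) \le (t,[i(x)])$ and using the identity $\theta i = \sigma$ on $\pi_{0}$ (the upper triangle of (\ref{eq 4})) gives $(s+2r,[\theta(y)]) \le (t+2r,[x])$, hence $D \subseteq [x]_{t+2r} = [x]_{t}$. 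The point of these two inclusions is the following reduction: if one can prove $D = [x]_{t}$, then the set $[x]_{t}$ occurs as a full path component of $L_{s+2r,k}(X)$ at the parameter $s+2r$; since the construction recalled before Lemma \ref{lem 12} identifies $t$ as the smallest parameter at which the set $[x]_{t}$ appears as a component, this forces $s+2r \ge t$, which is exactly $s \ge t-2r$.

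The main obstacle is therefore the reverse inclusion $[x]_{t} \subseteq D$. The easy half, $\theta([x]_{t}) \subseteq D$, is already in hand, together with $d(x',\theta(x')) < r$ for every $x' \in [x]_{t}$. To upgrade this to $x' \in D$ one needs $x'$ itself to be a vertex of $L_{s+2r,k}(X)$: then, since $d(x',\theta(x')) < r \le s+2r$ and $\theta(x') \in D$, the edge between $x'$ and $\theta(x')$ glues $x'$ into the component $D$. Certifying that $x'$ is a vertex of $L_{s+2r,k}(X)$ amounts to transporting the degree-$k$ condition that $x'$ satisfies in $Y$ (it lies in $C \subseteq L_{s,k}(Y)$, hence has at least $k$ neighbours within distance $s$) back to $X$ with only a $2r$ loss. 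This is precisely where the hypothesis $d_{H}(X^{k}_{dis},Y^{k}_{dis}) < r$ on configurations of $k+1$ distinct points must enter: it guarantees that a $(k+1)$-point configuration witnessing the degree of $x'$ in $Y$ is matched by a nearby $(k+1)$-point configuration in $X$, so that $\theta$ cannot collapse the $k$ neighbours and $x'$ retains degree at least $k$ within $s+2r$. This cardinality bookkeeping on $(k+1)$-point sets is the sharp tool advertised for layer points in the introduction, and it is the step I expect to demand the most care.
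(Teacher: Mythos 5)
Your proof is correct, but it reaches the essential lower bound $s \geq t-2r$ by a genuinely different route from the paper. Both arguments pivot on identifying $[\theta(y)]_{s+2r}$ with $[x]_{t}$: the paper assumes $s < t-2r$ and gets the identification purely formally, using the injectivity of $i \colon \pi_{0}L_{t,k}(X) \to \pi_{0}L_{t,k}(Y)$ from Lemma \ref{lem 13} to see $[\theta(y)]_{t}=[x]_{t}$, the Lemma \ref{lem 5} equality $[y]_{s}=[i(x)]_{t}$, and the fact that $\sigma$ and $i$ restrict injections on vertex sets, so that a composite-bijection count forces $\sigma \colon [\theta(y)]_{s+2r} \to [x]_{t}$ to be a bijection, contradicting layerness of $(t,[x])$. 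You instead prove the set equality $[\theta(y)]_{s+2r}=[x]_{t}$ unconditionally: one inclusion from $\pi_{0}$-functoriality of the interleaving ($\theta i = \sigma$ in diagram (\ref{eq 4})) together with the constancy $[x]_{t}=[x]_{t+2r}$, and the reverse inclusion by re-opening the stability theorem at the vertex level (matched $(k+1)$-configurations, then the short edge from $x'$ to $\theta(x')$). What each buys: your version is more geometric, never uses the hypothesis $r<t$ (only $t<t_{+}-2r$; your appeal to Lemma \ref{lem 13} in the opening paragraph is in fact never used later), and yields the stronger unconditional identification that also drives Corollary \ref{cor 15}; the paper's version is shorter and stays entirely inside the formal interleaving data, using nothing about how $\theta$ is built beyond what diagram (\ref{eq 4}) records, whereas yours depends on the internal properties $d(y,\theta(y))<r$ and configuration matching (consistent with the paper's conventions, since Lemma \ref{lem 10} assumes the former, but machinery the paper's proof avoids). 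Two small repairs to your sketch: the absence of layer parameters of $X$ in $(t,t_{+})$ is the definition of $t_{+}$, not a consequence of Lemma \ref{lem 12}, and the vertex-set constancy $[x]_{t}=[x]_{t+2r}$ then follows as in the proof of Lemma \ref{lem 12} via maximal layer points; and in the degree-transport step the correct bookkeeping is that the matched configuration consists of $k+1$ distinct points of $X$, each within $s+2r$ of $x'$ (within $r$ of $x'$ or within $s+r$ of some $y_{j}$), of which at least $k$ differ from $x'$ --- the danger to rule out is a matched point coinciding with $x'$, not ``$\theta$ collapsing the neighbours.''
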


\begin{proof}
  Suppose that $s < t-2r$.

  The map $i_{\ast}: \pi_{0}L_{t,k}(X) \to \pi_{0}L_{t,k}(Y)$ is a bijection by Lemma \ref{lem 13} and $i_{\ast}([x]) = i_{\ast}([\theta(y)]) = [i(x)]$ in $\pi_{0}L_{t,k}(Y)$. It follows that there is a commutative diagram of functions
\begin{equation*}
  \xymatrix{
    & [\theta(y)]_{s+2r} \ar[r]^-{\sigma} & [x]_{t} \ar[d]^{i} \\ 
           [y]_{s} \ar[ur]^{ \theta} \ar[rr]_{\sigma}^{\cong}
           && [i(x)]_{t}
           }
\end{equation*}
in which the map $i: [x]_{t} \to [i(x)]_{t}$ is a monomorphism since it is a subobject of a monomorphism of vertices.

The functions $i$ and $\sigma \cdot \theta$ are bijections, and so $\sigma: [\theta(y)]_{s+2r} \to [x]_{t_{j}}$ is an epimorphism. This function $\sigma$ is also a monomorphism, since it is a subobject of the monomorphism of vertices $L_{s+2r,k}(X)_{0} \to L_{t,k}(X)_{0}$.

It follows that the function $\sigma: [\theta(y)]_{s+2r} \to [x]_{t}$ is a bijection, so that $(t,[x])$ is not a layer point.
\end{proof}

  \begin{corollary}\label{cor 15}
    Suppose that $(t,[x])$ is a layer point for $\Gamma_{k}(X)$ such that $r< t < t_{+}-2r$. Then we have
    \begin{equation*}
  \theta_{\ast}i_{\ast}(t,[x]) = (t,[x]).
\end{equation*}
    \end{corollary}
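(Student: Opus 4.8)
The plan is to unwind the two maps and show that $\theta_\ast i_\ast(t,[x])$ is pinned to $(t,[x])$ by combining the parameter bound of Lemma \ref{lem 14} with the two bijection lemmas. First I would set $i_\ast(t,[x]) = (s,[y])$, the maximal layer point below $(t,[i(x)])$ in $\Gamma_k(Y)$. Lemma \ref{lem 14} gives $t-2r \le s \le t$, and Lemma \ref{lem 5} gives $[y]_s = [i(x)]_t$ as subsets of $Y$. By definition $\theta_\ast i_\ast(t,[x]) = \theta_\ast(s,[y]) = \max(s+2r,[\theta(y)])$, the maximal layer point below $(s+2r,[\theta(y)])$ in $\Gamma_k(X)$, so the whole problem reduces to identifying this maximal layer point with $(t,[x])$.

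The key parameter bookkeeping comes next. Since $t < t_+ - 2r$ we have $t+2r < t_+$, and together with $t-2r \le s \le t$ this gives $t \le s+2r \le t+2r < t_+$. Thus the entire interval $[t,t+2r]$ lies in $[t,t_+)$ and contains no layer parameter other than $t$ itself. Lemma \ref{lem 12} then shows that every structure map between the sets $\pi_0 L_{u,k}(X)$ with $u \in [t,t_+)$ is a bijection; in particular no components merge, and since the underlying vertex set of a component can only strictly grow at a layer parameter (as recorded by the cardinality criterion in the Remark following the definition of layer point), the underlying set of the component of $x$ is constant on this range: $[x]_t = [x]_{s+2r} = [x]_{t+2r}$ as subsets of $X$.

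I would then identify the component $[\theta(y)]_{s+2r}$. Pushing $[y]_s = [i(x)]_t$ forward to parameter $t$ gives $[y]_t = [i(x)]_t$, which under the bijection $i$ of Lemma \ref{lem 13} is $i([x]_t)$. Applying $\theta \colon \pi_0 L_{t,k}(Y) \to \pi_0 L_{t+2r,k}(X)$ and using the relation $\theta \cdot i = \sigma$ on path components yields $[\theta(y)]_{t+2r} = \sigma([x]_t) = [x]_{t+2r}$. By naturality of $\theta$ in the parameter, $[\theta(y)]_{t+2r}$ and $[x]_{t+2r}$ are the images of $[\theta(y)]_{s+2r}$ and $[x]_{s+2r}$ under the structure map $\pi_0 L_{s+2r,k}(X) \to \pi_0 L_{t+2r,k}(X)$, which is a bijection by Lemma \ref{lem 12}. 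Hence $[\theta(y)]_{s+2r} = [x]_{s+2r}$, and by the previous paragraph this common component has underlying set $[x]_t$.

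Finally I would invoke the characterization of $\max$ from Lemma \ref{lem 5}: $\max(s+2r,[\theta(y)])$ is the unique layer point whose underlying set equals $[\theta(y)]_{s+2r} = [x]_t$, realized as the minimal point carrying that underlying set. The point $(t,[x])$ is a layer point with underlying set $[x]_t$, and it is minimal with this property, since the layer point condition forces $[x']_{t'} \subsetneq [x]_t$ for every $(t',[x']) \le (t,[x])$ with $t' < t$. Therefore $\max(s+2r,[\theta(y)]) = (t,[x])$, which is the claimed identity. The main obstacle is exactly the two-parameter bookkeeping of the third paragraph: the equality $[\theta(y)]_{t+2r} = [x]_{t+2r}$ is immediate at parameter $t+2r$ from $\theta \cdot i = \sigma$, but it must be transported down to parameter $s+2r$, and this transport — together with the constancy of the underlying vertex set — is precisely what the hypothesis $t < t_+ - 2r$ (through $t+2r < t_+$) and the bound $s \le t$ of Lemma \ref{lem 14} are there to guarantee.
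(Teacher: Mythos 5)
Your proof is correct and follows essentially the same route as the paper's: both use Lemma \ref{lem 14} to get $t \leq s+2r \leq t+2r < t_{+}$, identify the component $[\theta(y)]_{s+2r}$ with $[x]_{s+2r}$ (the paper via the relation $(s+2r,[\theta(y)]) \leq (t+2r,[x])$ and the constancy $[x]_{t}=[x]_{t+2r}$, you via the Lemma \ref{lem 12} bijection transporting the equality $[\theta(y)]_{t+2r}=[x]_{t+2r}$ down to $s+2r$), and then conclude that the maximal layer point below $(s+2r,[\theta(y)])$ is $(t,[x])$ by the uniqueness/minimality characterization of Lemma \ref{lem 5}. The only differences are bookkeeping mechanics, not substance.
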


  \begin{proof}
  Suppose that $(s,[z])$ is a maximal layer point below $(t,[i(x)])$ in $\Gamma_{k}(Y)$. Then $t-2r \leq s \leq t$ by Lemma \ref{lem 14}, so that $t \leq s+2r \leq t+2r < t_{+}$.

  The layer point $(t,[x])$ is a maximal layer point below $(t+2r,[x])$, since $t+2r < t_{+}$, so that $[x]_{t} = [x]_{t+2r}$. The layer point $\theta_{\ast}(s,[z])$ is the maximal layer point below $(s+2r,[\theta(z)])$, and the relation
  \begin{equation*}
    (s+2r,[\theta(z)]) \leq (t+2r,[x])
  \end{equation*}
  implies that $\theta(z) \in [x]_{t+2r} = [x]_{s+2r}$, so that $x \in [\theta(z)]_{s+2r}$.
  It follows that the maximal layer point below $(s+2r,[\theta(z)])$ must also be the maximal layer point below $(t+2r,[x])$, which is $(t,[x])$. 
\end{proof}

\begin{lemma}\label{lem 16}
Suppose that $(s,[y])$ is a layer point of $\Gamma_{k}(Y)$, and that $s < s_{+}-2r$. Suppose that $(t,[z])$ is a maximal layer point below $(s+2r,[\theta(y)])$. Then $s \leq t \leq s+2r$.
  \end{lemma}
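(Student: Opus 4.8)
The upper bound is immediate: since $(t,[z])$ is a maximal layer point below $(s+2r,[\theta(y)])$ we have $(t,[z]) \leq (s+2r,[\theta(y)])$ in $\Gamma_{k}(X)$, and the poset structure forces $t \leq s+2r$. So the content is the lower bound $s \leq t$, which I would establish by contradiction: assume $t < s$.

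The plan is first to pin down the two components involved. By Lemma \ref{lem 5}, $[z]_{t} = [\theta(y)]_{s+2r}$ as subsets of $X$, and the minimality of $t$ means this $X$-component is constant on $[t,s+2r]$; in particular $\theta(y)$ already satisfies the degree-$k$ condition at the parameter $t$. On the $Y$-side, the hypothesis $s < s_{+}-2r$ says there is no layer parameter of $Y$ in $(s,s+2r]$, so Lemma \ref{lem 12} gives $[y]_{s} = [y]_{s+2r}$ as subsets of $Y$. Finally, because $(s,[y])$ is a layer point, the component of $y$ at every level $u<s$ is a \emph{proper} subset of $[y]_{s}$.

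The heart of the argument is to contradict this strictness by showing that, once $t<s$, all of $[y]_{s}$ is already assembled below level $s$. Note first that $i([z]_{t})$ is a single connected subset of $\Gamma_{k}(Y)$ at parameter $t$ (edges of the $X$-component persist under the inclusion $i$), it contains $\theta(y)$, and it lies inside $[y]_{s}$ because $i_{\ast}[z]_{t} = i_{\ast}\theta_{\ast}[y]_{s} = \sigma_{\ast}[y]_{s} = [y]_{s+2r} = [y]_{s}$. For each vertex $p \in [y]_{s}$ its shift-partner $\theta(p)$ lies in $[\theta(y)]_{s+2r} = [z]_{t}$, so $i(\theta(p)) \in i([z]_{t})$; and since $d(p,\theta(p)) < r$ the edge $p$–$\theta(p)$ attaches $p$ to $i([z]_{t})$ at some parameter $u_{p}$. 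If every $u_{p}<s$, then $i([z]_{t})$ together with all these vertices $p$ is connected at a common parameter $u<s$ and exhausts $[y]_{s}$, so $[y]_{u}=[y]_{s}$ with $u<s$, contradicting that $(s,[y])$ is a layer point.

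The step I expect to be the main obstacle is the control of the parameters $u_{p}$, i.e.\ verifying that each $p$ genuinely joins below $s$. When $p$ already meets the degree-$k$ condition at small parameters this is clear, but for a vertex $p$ that only becomes a vertex of $L_{\bullet,k}(Y)$ near $s$ one must recover its degree at a lower parameter; I would do this by transporting the $k$ neighbours of $\theta(p)$ — which lie within $t$ of $\theta(p)$ in $X$ — back across $i$, estimating their displacement with Lemma \ref{lem 10} and the distinct-$(k+1)$-point hypothesis $d_{H}(X^{k}_{dis},Y^{k}_{dis})<r$. This is the crux: a crude triangle-inequality estimate only forces $t \geq s-r$, and tightening it to the stated $s \leq t$ is exactly where the layer-parameter gap $s<s_{+}-2r$ must be fed back in. This interplay of the distinct-point count with the spacing of layer parameters is precisely the sharper cardinality tool advertised in the introduction; the remaining bookkeeping with the stability squares (\ref{eq 3}) and the component identifications is then routine.
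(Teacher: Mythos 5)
Your upper bound and your preparatory identifications are fine ($t \leq s+2r$ is immediate; $[z]_{t} = [\theta(y)]_{s+2r}$ by Lemma \ref{lem 5}; $[y]_{s} = [y]_{s+2r}$ from the gap hypothesis via Lemma \ref{lem 12}; $i([z]_{t}) \subseteq [y]_{s}$). But the proof has a genuine gap, and you have named it yourself: the control of the attachment parameters $u_{p}$ is never established, and by your own accounting the metric estimate you propose (transporting neighbours across $i$ and invoking Lemma \ref{lem 10}) only yields $t \geq s-r$, not $s \leq t$. The obstruction is real, not just technical: a vertex $p \in [y]_{s}$ may satisfy the degree-$k$ condition for the first time at parameter $s$ itself, in which case the edge $\{p, i\theta(p)\}$ simply does not exist in $L_{u,k}(Y)$ for any $u < s$, no matter how small $d(p,\theta(p))$ is, and no displacement estimate on the neighbours of $\theta(p)$ repairs this, since it is the degree of $p$, not of $\theta(p)$, that is missing. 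Your attachment edges also require parameters of size at least $d(p,\theta(p))$, which would smuggle in an assumption like $s > r$ that the lemma does not make. So the contradiction ``$[y]_{u} = [y]_{s}$ for some $u < s$'' is never actually reached.

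The idea you are missing is that the hypothesis $s < s_{+} - 2r$ is fed in through counting, not through geometry. Lemma \ref{lem 12} gives more than $[y]_{s} = [y]_{s+2r}$: it makes $\sigma: \pi_{0}L_{s,k}(Y) \to \pi_{0}L_{s+2r,k}(Y)$ a bijection, and since $i \circ \theta = \sigma$ on path components, this forces $\theta: \pi_{0}L_{s,k}(Y) \to \pi_{0}L_{s+2r,k}(X)$ to be injective; from $\theta([y]) = \theta([i(z)])$ one then gets $[y]_{s} = [i(z)]_{s}$, identifying the two components at level $s$ outright. Now assume $t < s$ and chase the commutative diagram of component maps: the maximality of $(t,[z])$ makes $\sigma: [z]_{t} \to [\theta(y)]_{s+2r}$ a bijection, all the vertex-level maps $\sigma$ and $i$ are injections of subsets, and $\theta: [y]_{s} \to [\theta(y)]_{s+2r}$ is a monomorphism by the right-hand triangle; composing, one concludes that $\sigma: [i(z)]_{t} \to [y]_{s}$ is a bijection of finite sets. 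That exhibits $(t,[i(z)]) \leq (s,[y])$ with $t < s$ and $\vert [i(z)]_{t} \vert = \vert [y]_{s} \vert$, contradicting the layer condition at $(s,[y])$ directly --- no per-vertex connectivity argument, no edge construction, and no lower bound on $s$ are needed. This purely cardinal mechanism is exactly the ``sharper tool'' the paper advertises, and it is what replaces the step your sketch leaves open.
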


\begin{proof}
  Suppose that $t<s$.
  
  The map $\sigma: \pi_{0}L_{s,k}(Y) \to \pi_{0}L_{s+2r,k}(Y)$ is a bijection, since $\Gamma_{k}(Y)$ has no layer parameters in the interval $(s,s+2r]$, by assumption and Lemma \ref{lem 12}. It follows that the map $\theta: \pi_{0}L_{s,k}(Y) \to \pi_{0}L_{s+2r}(X)$ is a monomorphism.

Then $\theta([y])=\theta([i(z)]$ implies that $[y]_{s}=[i(z)]_{s}$, so the diagram
    \begin{equation*}
      \xymatrix{
        [i(z)]_{t} \ar[rr]^{\sigma} \ar[dr]^{\theta}
        && [y]_{s} \ar[r]^{\sigma} \ar[dr]^{\theta} & [y]_{s+2r} \\
        [z]_{t} \ar[u]^{i} \ar[r]_{\sigma} & [z]_{t+2r} \ar[rr]_{\sigma} && [\theta(y)]_{s+2r} \ar[u]_{i}
      }
    \end{equation*}
    commutes.

The commutativity of the triangle on the right implies that $\theta: [y]_{s} \to [\theta(y)]_{s+2r}$ is a monomorphism.

The function $\sigma: [z]_{t} \to [\theta(y)]_{s+2r}$ a bijection, so
$\theta: [y]_{s} \to [\theta(y)]_{s+2r}$ is a bijection.

The composite
\begin{equation*}
  [z]_{t} \xrightarrow{i} [i(z)]_{t} \xrightarrow{\sigma} [y]_{s}
\end{equation*}
  a bijection, so $\sigma: [i(z)]_{t} \to [y]_{s}$ is a bijection, and it follows that $(s,[y])$ is not a layer point.
\end{proof}

  The analysis of the morphism
\begin{equation*}
  \theta_{\ast}: V(Y) = \L_{0}(Y) \to \L_{0}(X) = V(X)
\end{equation*}
for Vietoris-Rips complexes is sharper, because all complexes $V_{s}(Y)$ share the same set of vertices, namely $Y$. In this case, we have a stronger version of Lemma \ref{lem 16}, with a very different argument.

\begin{lemma}\label{lem 17}
         Suppose that $(s,[y])$ is a layer point of $\Gamma_{0}(Y)$, and that $(t,[z])$ is a maximal layer point of $\Gamma_{0}(X)$ below $(s+2r,[\theta(y)])$. Then $s \leq t \leq s+2r$.
         \end{lemma}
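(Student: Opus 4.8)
The plan is to argue directly with single-linkage chains, exploiting the fact that for $k=0$ every complex $V_{u}(X)=L_{u,0}(X)$ and $V_{u}(Y)=L_{u,0}(Y)$ carries the fixed vertex set $X$, respectively $Y$; this constancy is what makes the argument sharper than, and genuinely different from, the $\pi_{0}$-bookkeeping of Lemma \ref{lem 16}. Write $[w]_{u}$ for the path component (a subset of vertices) of $w$ in $V_{u}$, and use the structure maps in their evident form: $i\colon X\hookrightarrow Y$ is isometric and induces $i\colon V_{u}(X)\to V_{u}(Y)$; the map $\theta\colon Y\to X$ satisfies $d(y,\theta(y))<r$ and induces $\theta\colon V_{u}(Y)\to V_{u+2r}(X)$, since by Lemma \ref{lem 10} (and the triangle inequality) $\theta$ grows diameters by at most $2r$; and $\sigma$ denotes the scale inclusions. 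The inequality $t\le s+2r$ is immediate, since $(t,[z])\le (s+2r,[\theta(y)])$ forces $t\le s+2r$ by the order relation on $\Gamma_{0}(X)$, so the content is the lower bound $s\le t$.

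First I would record the two facts to be played against each other. By Lemma \ref{lem 5}, maximality of $(t,[z])$ below $(s+2r,[\theta(y)])$ means that, as subsets of $X$, $[z]_{t}=[\theta(y)]_{s+2r}=:C$, and that $t$ is the least phase change number at which $C$ is a single component of $V_{\bullet}(X)$. On the $Y$ side put $A:=[y]_{s}\subseteq Y$; since $(s,[y])$ is a layer point (equivalently a branch point, by Lemma \ref{lem 3}), for $s>0$ condition 1) gives $[y]_{u}\subsetneq A$ for every $u<s$, i.e. $A$ is never a single component of $V_{u}(Y)$ below scale $s$. (The case $s=0$ makes $s\le t$ trivial.)

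The key step is a path–pushing argument. Assume for contradiction that $t<s$. Given $a,a'\in A$, connectivity of $A$ at scale $s$ gives a chain $a=w_{0},\dots,w_{n}=a'$ in $A$ with $d(w_{j},w_{j+1})\le s$. Since $\theta$ sends the connected set $A$ into the component $C=[\theta(y)]_{s+2r}$, each $\theta(w_{j})\in C$; and because $C$ is connected in $V_{t}(X)$, consecutive points $\theta(w_{j}),\theta(w_{j+1})$ are joined by a path in $V_{t}(X)\subseteq V_{t}(Y)$ whose edges have length $\le t$. Splicing in the edges $w_{j}\leftrightarrow\theta(w_{j})$, of length $d(w_{j},\theta(w_{j}))<r$, produces a path in $Y$ from $a$ to $a'$ all of whose edges have length at most $\max(t,\rho)$, where $\rho:=\max_{w\in A}d(w,\theta(w))<r$. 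As $a,a'\in A$ were arbitrary, $A$ lies in a single component of $V_{\max(t,\rho)}(Y)$; by component growth this component equals $A$, so $[y]_{\max(t,\rho)}=A$.

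The main obstacle is precisely to certify $\max(t,\rho)<s$, which is what closes the contradiction. The inner distances are harmless, as $t<s$ by assumption; the real cost is the ``vertical'' moves $w\mapsto\theta(w)$, of length $<r$. When $r\le s$ one gets $\rho<r\le s$, hence $[y]_{\max(t,\rho)}=A$ with $\max(t,\rho)<s$, contradicting that $(s,[y])$ is a layer point and forcing $t\ge s$. The regime $s<r$ is genuinely delicate: there $\theta$ can collapse $A$ toward a single point, making $C$ and hence $t$ strictly smaller than $s$, so the lower bound really depends on $s$ being at least the Hausdorff radius $r$. I would therefore carry the standing hypothesis $s\ge r$ — consistent with the surjectivity used in Lemma \ref{lem 13} and with the remark that $\pi_{0}L_{s,k}(X)\to\pi_{0}L_{s,k}(Y)$ is onto for $s\ge r$ — or else dispose of sub-$r$ layer parameters separately before running the chain argument. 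This scale bookkeeping is the one place I expect to require genuine care.
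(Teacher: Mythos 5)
Your proof is correct in the regime it claims ($s \geq r$), and it takes a genuinely different route from the paper's. The paper argues by preimage bookkeeping rather than explicit chains: by Lemma \ref{lem 5}, maximality gives $[z]_{t} = [\theta(y)]_{s+2r}$ as subsets of $X$, hence equal preimages $\theta^{-1}([z]_{t}) = \theta^{-1}([\theta(y)]_{s+2r})$ under the vertex map $\theta\colon Y \to X$; assuming $t < s$, it sandwiches
\begin{equation*}
  \theta^{-1}([z]_{t}) = \sqcup\,[u] \subset [y]_{t} \subset [y]_{s} \subset \theta^{-1}([\theta(y)]_{s+2r}),
\end{equation*}
so the two middle terms coincide, and $[y]_{t} = [y]_{s}$ contradicts the layer-point property. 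Your chain-splicing computation, which shows $[y]_{\max(t,\rho)} = [y]_{s}$ with $\rho < r$, is exactly the connectivity content hidden inside the paper's inclusion $\sqcup\,[u] \subset [y]_{t}$: that inclusion asserts that every vertex of $\theta^{-1}([z]_{t})$ connects to $y$ via vertical edges $w \leftrightarrow \theta(w)$ of length $< r$ together with edges of length $\leq t$ inside $[z]_{t}$, which is your splice. The paper's version is terser, packaging the contradiction as a set equality with no intermediate scale to track; your version makes the scale cost explicit, which is precisely what lets you see where a hypothesis on $r$ must enter.

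Your closing caveat is not excess caution: the lemma as stated fails for $s < r$, and the paper's own proof has the same unflagged gap. Take $X = \{x\} \subset Y = \{x, y_{1}, y_{2}\}$ with the three points collinear, $d(x,y_{i}) = 0.9r$ and $d(y_{1},y_{2}) = 1.8r$. Then $d_{H}(X,Y) = 0.9r < r$, the vertex $(0.9r,[x])$ with $[x]_{0.9r} = Y$ is a layer point of $\Gamma_{0}(Y)$, but $(0,\{x\})$ is the only layer point of $\Gamma_{0}(X)$, so $t = 0 < s = 0.9r$. In the paper's argument the failure occurs at the inclusion $\sqcup\,[u] \subset [y]_{t}$ (here $\theta^{-1}([z]_{0}) = Y \not\subset [y]_{0} = \{x\}$): connecting the components $[u]$ to $y$ at scale $t$ requires the vertical edges to exist at that scale, i.e.\ roughly $r \leq t$, and the proof even forms $\pi_{0}V_{t-2r}(Y)$, tacitly assuming $t \geq 2r$ --- implicit restrictions at least as strong as your $s \geq r$, given that $t < s$ is the case under attack. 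So some standing hypothesis of the kind you propose (or a smallness condition on $r$ relative to the layer parameters, in the spirit of the assumption $r < t$ in Lemma \ref{lem 14}) is genuinely needed, and with it your argument is complete.
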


\begin{proof}

  The sets $[z]_{t}$ and $[\theta(y)]_{s+2r}$ have the same cardinality, and so $\theta(y) \in [z]_{t}$.

       Consider the collection of elements $[u] \in \pi_{0}V_{t-2r}(Y)$ which map to $[z]_{t} = [\theta(y)]_{t}$ in $\pi_{0}V_{t}(X)$.
       Then $\theta^{-1}([z]_{t}) = \sqcup\ [u]$ as a subset of the vertices $Y$ of $V_{t-2r}(Y)$, and $y \in [u]$ for some $[u]$. All such components $[u]$ map to the same path component $[y]_{t}$ in $V_{t}(X)$.

       In the diagram
       \begin{equation*}
         \xymatrix{
           \theta^{-1}([z]_{t}) \ar[r] \ar[d]
           & \theta^{-1}([\theta(y)]_{s+2r}) \ar[r] \ar[d] & Y \ar[d]^{\theta} \\
                 [z]_{t} \ar[r]_{\cong} & [\theta(y)]_{s+2r} \ar[r] & X
         }
         \end{equation*}
both squares are pullbacks, so
       the function
       \begin{equation*}
         \theta^{-1}([z]_{t}) \to \theta^{-1}([\theta(y)]_{s+2r}
         \end{equation*}
       is a bijection. 
       
       Suppose that $t < s$. Then
       \begin{equation*}
         \theta^{-1}([z]_{t}) = \sqcup\ [u] \subset [y]_{t} \subset [y]_{s} \subset \theta^{-1}([\theta(y)]_{s+2r})
         \end{equation*}
while       
$\theta^{-1}([z]_{t}) = \theta^{-1}([\theta(y)]_{s+2r})$ as subsets of $Y$.

It follows that $[y]_{t} = [y]_{s}$, so that $(s,[y])$ is not a layer point.
\end{proof}

  Lemma \ref{lem 17} and Lemma \ref{lem 14} together impose rather tight constraints on the layer points of $\Gamma_{0}(Y)$, in relation to those of $\Gamma_{0}(X)$. Recall that the comparison $\Gamma_{0}(X) \to \Gamma_{0}(Y)$ arises from applying path component functors to the comparison $V_{\ast}(X) \to V_{\ast}(Y)$. In this case, $d_{H}(X,Y)=r$ is the bound on Hausdorff distance which leads to the interleaving diagrams (\ref{eq 3}), (\ref{eq 4}) and (\ref{eq 5}).

  To repeat the statement of Lemma \ref{lem 17}, suppose that $(s,[y])$ is a layer point for $\Gamma_{0}(Y)$, and suppose that $(t,[x])$ is a maximal layer point below $(s+2r,[\theta(y)])$. Then $s \leq t \leq s+2r$.

  It follows, in particular, that all layer points of $\Gamma_{0}(Y)$ are in the intervals $[t-2r,t]$ corresponding to layer points $(t,[x])$ of $\Gamma_{0}(X)$.

\nocite{clusters}

\bibliographystyle{plain}
\bibliography{spt}

\end{document}